\newtheorem{theorem}{Theorem}
\newtheorem{corollary}{Corollary}
\newtheorem{prop}{Proposition}
\newtheorem{definition}{Definition}
\newcommand{\R}{\mathbb R}
\newcommand{\T}{^\mathsf{T}}
\newcommand{\mL}{\mathcal L}
\newcommand{\x}{\mathbf{x}}
\newcommand{\myC}{\gamma}
\def\d{\mathrm{d}}
\author{Toby Sanders, Justin J. Konkle, Erica E. Mason, Patrick W. Goodwill}
\title{Multi-Harmonic Gridded 3D Deconvolution (MH3D) for Robust and Accurate Image Reconstruction in MPI for Single Axis Drive Field Scanners}
\date{ }
\begin{document}
\maketitle

\begin{abstract}
\textbf{Objective:} This work introduces a new magnetic particle imaging (MPI) reconstruction framework based on multi-harmonic 3D deconvolution (MH3D) of gridded portraits, offering a principled, model-driven approach to MPI imaging.

\textbf{Approach:} MH3D defines a convolutional forward model using higher harmonic portraits, which are gridded images formed from filtered frequency-domain signal components. Each harmonic portrait is modeled as a convolution with a distinct PSF, closely approximated by derivatives of the Langevin function, and incorporates receive sensitivity and mesh downsampling for accurate modeling. We also introduce practical strategies for calibration, phase correction, and artifact reduction.

\textbf{Main Results:} We validate the MH3D approach using analytic approximations, numerical simulations, and experimental phantom data. {\color{black}MH3D yields high-resolution 3D reconstructions on seconds-scale runtimes, improves image quality relative to common 3rd-harmonic-only reconstructions, and achieves image quality and resolution comparable to a generalized model-based method in simulations and phantom experiments.}

\textbf{Significance:} {\color{black}This work offers new theoretical insight into MPI signal structure, unveiling the methodological and theoretical underpinnings absent in earlier single-harmonic or heuristic methods, thereby supporting accurate and robust 3D imaging with excellent computational efficiency.}
\end{abstract}




\section{Introduction}
Magnetic particle imaging (MPI) is an emerging medical imaging modality that can map superparamagnetic iron oxide nanoparticle (SPION) concentrations in the human body. It offers a unique set of imaging capabilities complementary to traditional imaging modalities like MRI, CT scans, and ultrasound, and can produce linear, quantifiable, high-resolution images of the SPION tracers \cite{gleich2005tomographic,dulinska2019superparamagnetic,panagiotopoulos2015magnetic,knopp2012magnetic,saritas2013magnetic}. The potential medical applications of MPI are currently a very active area of research and include but are not limited to cardiovascular (perfusion) imaging, cell tracking, and sentinel lymph node imaging \cite{zheng2015magnetic,sehl2020perspective,weizenecker2009three, panagiotopoulos2015magnetic, bulte2015quantitative,talebloo2020magnetic}. The list of applications will continue to evolve and expand as the MPI technology transitions from the preclinical research space and into clinical labs and hospitals.

The work in this article focuses on a fundamental component of MPI, the image reconstruction problem, the process by which the raw signal data acquired from the scan is processed to form images representing the SPION maps. Our new method develops a complete model utilizing the full breadth of information in the MPI signal and is based on deconvolution of \textit{harmonic portrait} data. The advantage of this new model is in the intermediate harmonic portrait domain, where we can analyze the data, correct potential artifacts, and tune and select parameters based on the data analysis.

An MPI scanner works by selectively exciting magnetic particles with an oscillating drive field. Receive coils then detect the change in magnetization of the particles induced by the drive field. The received time-domain signal data is then used to reconstruct the SPION density map. The unique property of the SPIONs is that their magnetization response to the magnetic field is given by the nonlinear Langevin function, which allows for harmonic signals necessary for MPI.

The static gradient magnetic field, $H(\vec x)$, creates a field-free region (FFR), which is either a field-free point (FFP) or field-free line (FFL) in 3D. The drive field causes the FFR to oscillate and the particles near the FFR to selectively experience a change in magnetization, while the particles far from the FFR remain in a saturated magnetic state as characterized by the Langevin function. This effect creates spatially localized information about the SPION density in the received signal, which allows for accurate image reconstruction when paired with FFR location information. 

\subsection{Contribution}

This work develops a new model-based method for image reconstruction in MPI, which is rooted in characterizing the signals at their harmonics. The harmonics are located at integer multiples of the drive frequency, $k \cdot f_0$, where $f_0$ is the sinusoidal drive field frequency and $k\ge2$ is an integer\footnote{The fundamental harmonic at $f_0$ is absent in the MPI receive signal due to the necessary receive chain filter of the excitation drive field.}. Empirically, the signal spreads to a specific bandwidth around each harmonic defined by the shift field frequency, gradient, and particle characteristics.

We refer to our method as multi-harmonic 3D deconvolution (MH3D). The term \textit{multi-harmonic} refers to the idea that we first compress the signal into a harmonic domain by interpolating the filtered harmonic data onto the FFP locations. This compressed data form we refer to as \textit{harmonic portraits}, since they represent a portrait-like representation of the SPIONs, with unique \textit{signatures} at each harmonic. These portraits are then used to compute a multi-frame 3D deconvolution to form the image reconstruction. Each of the portraits could in principle be used individually as a single-frame deconvolution, but using multiple harmonics together improves the resolution both theoretically and empirically.

In contrast, prior work developed a complete physics-based computational image reconstruction model for MPI\cite{sanders2025physics}, which is amenable to arbitrary scanning parameters and geometries. However, the current method is also a physics-based image reconstruction model that does not trivially generalize to arbitrary scanning parameters. It offers however the distinct advantage of operating in an intermediate portrait domain, which allows for careful data analysis and complementary data processing prior to image reconstruction  (see some of the methods outlined in Section \ref{sec: additional}). For example, we can detect and filter background signal, analyze which harmonics to use, select reconstruction parameters, and implement very simple data calibration methods that have so far been less trivial for the former generalized model. This portrait domain analysis also allows for detection and debugging of potential scanner hardware issues and allows for quick \textit{intermediate} image visuals during a scan instead of waiting for the full reconstruction. For these reasons, MH3D has been an important tool for progress in our group's work for both software and hardware development, while the former model will also continue to be an important tool in the future of MPI as the technology continues to progress. 

{\color{black} 
The robustness of the MH3D methodology on our next-generation scanner has been critical to our progress, consistently providing quality reconstructions with less time spent optimizing when compared with methods such as X-space and the generalized model, while also avoiding tedious system calibrations required for system-matrix approaches\cite{knopp2012magnetic}. This improved stability and resolution has enabled, for the first time, the possibility of human subject testing on our large field-of-view scanner --- an important milestone for MPI as it advances toward applications such as perfusion imaging\cite{guo2025exploring} and cell tracking \cite{sehl2020perspective}.
}

Another important result of MH3D is the theoretical analysis of the mathematical components of MPI that fall out of the methodology. We present many known concepts in MPI from this new perspective, and also some lesser-known concepts, such as which harmonics are most important in MPI and how many are needed for image reconstruction. This work primarily focuses on the important algorithmic components of MH3D, while some of these theoretical components are also provided to support the methodology and understanding.

The concept for MH3D is applicable for a somewhat general set of scanner geometries. However, we primarily focus the mathematical descriptions from the viewpoint of our scanner's hardware to keep the details concise and aligned. Section \ref{sec: scanner} outlines the scanner parameters, and most of the descriptions thereafter assume this particular set up. The major concepts behind MH3D are provided in Section \ref{sec: main}. Additional but fundamental details behind MH3D are given in Section \ref{sec: additional}, while several examples are provided along the way to support the ideas presented.

\subsection{Related Previous Approaches in MPI}
While gridding harmonic data into a \textit{portrait-like} domain is not new in MPI, previous approaches vary in how this data is used for image reconstruction. This work provides a formal mathematical framework, along with numerical methods and calibration strategies, to robustly reconstruct images from harmonic portraits.

An earlier approach to harmonic deconvolution was proposed by one of the present authors using a single harmonic\cite{goodwill2010narrowband}. That method applied least-squares reconstruction, which is inevitably dominated by noise and artifacts. A variant involved clipping high-frequency components in $k$-space to reduce noise, but this introduces Gibbs ringing artifacts. These limitations ultimately led the author to abandon the approach in favor of what became the widely adopted X-space methods \cite{goodwill2010x}. Similarly, several studies have relied solely on the 3rd harmonic signal \cite{mason2022side, janssen2022single, nomura2024development, mcDonough2024tomographic}. These portraits approximate SPION distributions only near the drive field's DC bias and do not generalize to 3D imaging. For example, \cite{janssen2022single} identified the convolutional relationship between the image and harmonic PSF but used a system matrix-based reconstruction instead. The method in \cite{mason2022side} used 2D tomographic projection-based reconstruction from 3rd harmonic portraits, tailored to specific temporal resolution constraints. Our work generalizes this concept to 3D and introduces a more complete modeling and computational framework.

Multi-harmonic methods have also been explored. In \cite{liu2022weighted}, harmonic portraits similar to ours were defined but combined only via a weighted sum, which lacks theoretical rigor and limits reconstruction accuracy. {\color{black}Methods such as these and third-harmonic approaches are largely empirical, based on visual assessment of gridded harmonics. The common preference for the third harmonic is essentially incidental: the third derivative of the Langevin function most closely resembles a symmetric impulse response, as our theoretical development will make clear.} In contrast, our approach treats each harmonic portrait as a convolution with a distinct PSF combined with a receive-coil weighting, enabling principled multi-frame reconstruction.


{\color{black}
Finally, it is important to position MH3D relative to the two major classes of MPI reconstruction methods: system matrix (SM) approaches and X-space methods. SM-based methods \cite{knopp2012magnetic} are widely used due to their generality and strong empirical performance, but they require extensive calibration measurements, which becomes increasingly burdensome and must often be repeated for different tracer types or scanner conditions. The calibration matrix size ultimately limits image resolution and slows progress toward clinical translation. X-space methods \cite{goodwill2010narrowband,goodwill2011multidimensional} rely on a sequence of engineered processing steps, such as calibration, filtering, and gridding, rather than an explicit signal model.  As a result, X-space reconstructions must be customized for each scanner geometry, are more challenging to generalize to arbitrary scanner parameters (e.g., receive sensitivity, FFR trajectories), and cannot easily incorporate a priori constraints such as non-negativity or smoothness. This limits its robustness and flexibility compared to model-based approaches, highlighting the motivation for our adoption of MH3D on our current scanner configuration.

Significant progress has also been made in model-based reconstruction strategies. Knopp and colleagues have developed physics-based forward models that incorporate relaxation effects, field imperfections, and scanner-specific parameters, allowing reconstructions without full system matrix calibration and with improved quantitative accuracy \cite{knopp2011model,knopp2017relaxation,knopp2021advances, droigk2025chebyshev, maass2024equilibrium}. While these approaches represent an important advance, they remain computationally demanding and are not generally practical for high-resolution 3D imaging.

Our MH3D framework builds on the same spirit of model-driven reconstruction in our previous work \cite{sanders2025physics}, but with the distinct innovation of operating in the harmonic portrait domain. This formulation retains the benefits of physics-based modeling while offering computational efficiency, reduced calibration requirements, and new opportunities for artifact suppression and intermediate data analysis, making it well-suited for large-scale 3D MPI.
}

\section{FFP Scanning Geometry} \label{sec: scanner}

Our scanner operates using an FFP with a static linear magnetic gradient field given by
\begin{equation}
	H(\vec x ) = G \vec x = G_0 \cdot 
	\begin{bmatrix}
		\frac{x}{2},  & \frac{y}{2}, &  z
	\end{bmatrix}\T ,
\end{equation}
where $G_0 = 0.554$ T/m. The transmit drive coil excites along the $z$-axis, which defines the drive field given by
\begin{equation}
	H_D(t) = \begin{bmatrix}
		0 , & 0, & B_{ex} \sin(2\pi f_0 t)
	\end{bmatrix}\T ,
\end{equation}
where $B_{ex}$ is the excitation amplitude. The position excursion in meters of the FFR due to the drive field is $A = B_{ex}/G$. The slow shift field, sometimes called the \textit{focus field}, rasters in a zigzag pattern in an $xy$ plane for a fixed $z$ location. We refer to this scan at a fixed $z$ position as a $z$-slab. Figure \ref{fig: portraits} (c) demonstrates our FFP scanning trajectory for a single $z$-slab.

\begin{figure}
	\centering
	\includegraphics[width=1\textwidth]{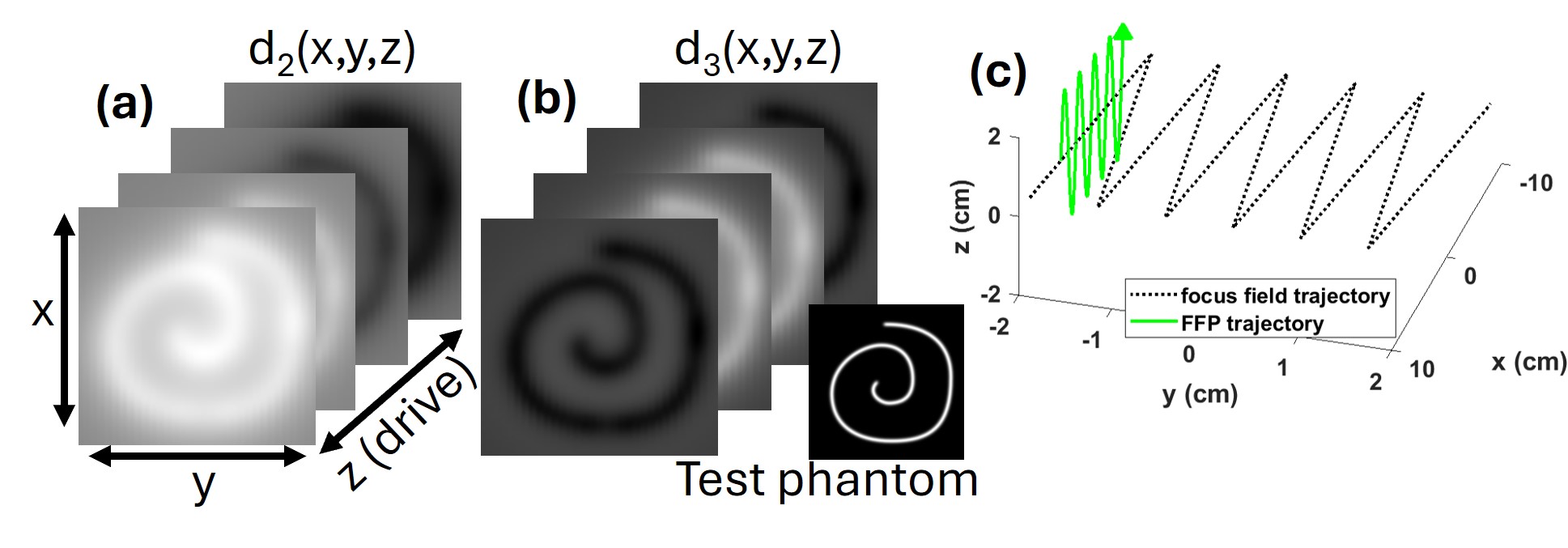}
	\caption{(a) and (b) show the 2nd and 3rd harmonic portraits at 4 consecutive $z$-slabs for a spiral phantom (shown in the bottom right of (b)) located in a plane at $z=0$. (c) demonstrates the FFP scanning trajectory for a single z-slab.}
	\label{fig: portraits}
\end{figure}

Multiple $z$ positions are acquired to obtain a full 3D FFP data acquisition for image reconstruction. The $z$-slab positions are $\{ z_j \}_{j=1}^{N_z}$, where $N_z$ is the total number of slabs. The positions are monotonically increasing, $z_1 < z_2 < \dots <z_{N_z}$, and equally spaced with $\Delta z = z_{j+1} - z_j $. Furthermore, $\Delta z$ is chosen so that the FFP trajectories between neighboring slabs overlap, which can be expressed as $\Delta z < 2 A$.

The received time-domain MPI signal for a single $z$-slab is given in \cite{goodwill2011multidimensional} by
\begin{equation}
	s(t ; \, z_j) =  m\frac{d}{dt} \iiint \rho(\vec x) \vec b_1 (\vec x)\T \frac{H_j(\vec x , t) }{\| H_j(\vec x , t) \| } \mathcal L (\beta \| H_j(\vec x , t) \|) \, d \vec x ,
\end{equation}
where
\begin{itemize}
	\item $\rho$ is the SPION density to image.
	\item  $H_j(\vec x , t)$ is the total magnetic field for the scanned position $z_j$ at time $t$.
	\item $\mathcal L$ is the Langevin function given by $\mathcal L(x) = \coth(x) - 1/x$.
	\item $\beta$ is a conglomerate SPION and scanner dependent constant, given by $\beta = \frac{\mu_0 m}{\kappa_B T}$, where $T$ is temperature, $m$ is the SPION specific magnetic moment, $\kappa_B$ is the Boltzmann constant, and $\mu_0$ is the vacuum permeability. 
	\item $\vec b_1$ is the vector field of spatially varying receive coil sensitivity values.
\end{itemize} 

We denote the FFP location for each position $z_j$ at time $t$ by $\vec \xi_j(t)$, and it is formally defined by the point satisfying $H_j(\vec \xi_j (t) , t) = 0$. 
For each fixed slab, the rastering pattern is such that it densely covers the entire $xy$ imaging FOV, but without repeating over a position in that plane. This makes for what is essentially a 1-to-1 relationship in our discretized domains between time and FFP location.

{\color{black}An introductory experimental imaging example with the scanner geometry just described is shown in Figure \ref{fig: LSI}, where the reconstruction used the MH3D method laid out in the sections that follow. This phantom contains an array of small vials at regularly spaced intervals, each containing 80 $\mu$g of VivoTrax (Magnetic Insight, Inc., Alameda, CA.). It is useful for demonstrating the shift invariance of both the scanner and corresponding reconstruction. Shown in Figure \ref{fig: LSI}(b)  are cross-section images of the reconstruction from two different views. Observe that these images demonstrate consistency in the reconstructed SPION intensities across the entire imaging FOV, although with some variation in the shapes due to mild scanner inhomogeneities. In addition, there is approximately a double resolution in the $z$-axis compared with the $x$ and $y$ axes due to the $z$-axis-only transmit.}

\begin{figure}
	\centering
	\includegraphics[width=0.6\textwidth]{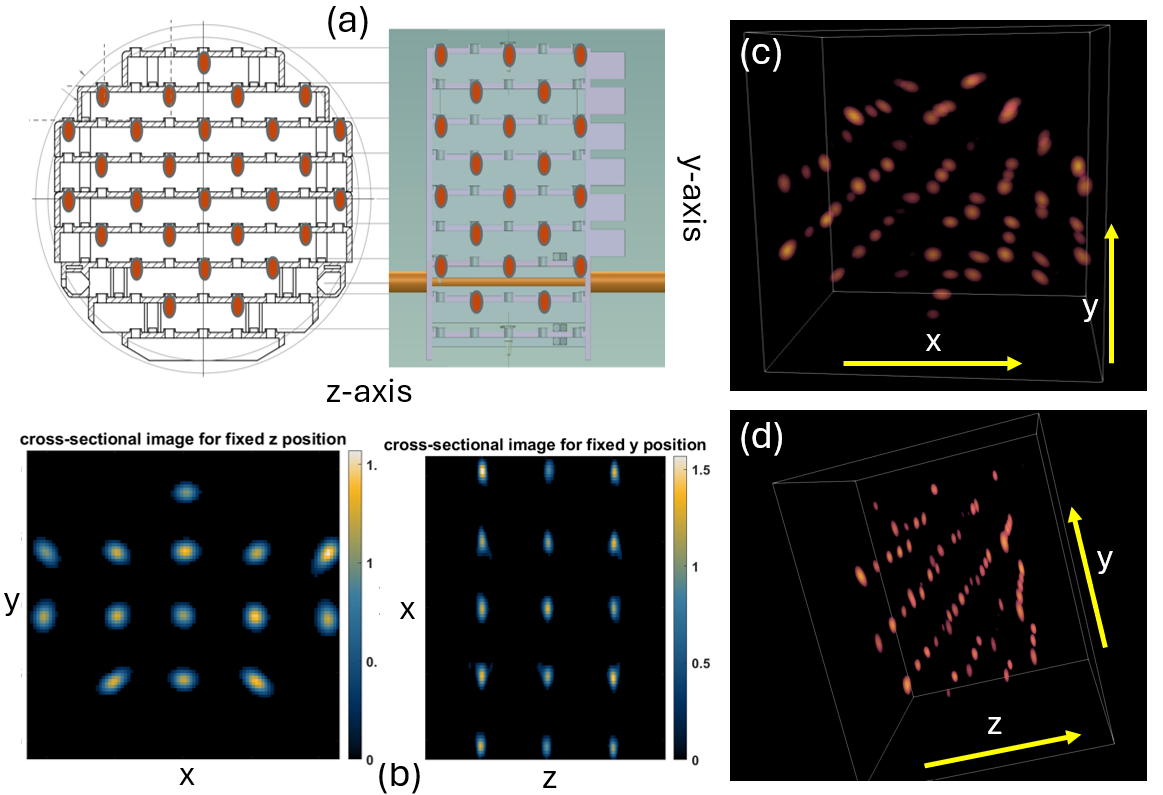}
	\caption{MH3D reconstructions of an experimental data set with a phantom containing an array of small vials of equal SPION volume (80 $\mu$g per vial) across the entire FOV, which is intended to test the linearity and shift invariance of the MPI scanner and reconstruction. (a) CAD model of the test phantom. (b) Cross-sectional images of the reconstruction. (c)-(d) 3D volume renderings of the reconstruction. }
	\label{fig: LSI}
\end{figure}

\section{Harmonic Portraits and PSFs for MH3D} \label{sec: main}

\subsection{Harmonic Portrait Domain} 
For each $z$-slab, the 2D rastering pattern is identical, defined by the FFP trajectory $(x_\xi(t), y_\xi(t))$, giving a time-to-position mapping $t \leftrightarrow (x_\xi(t), y_\xi(t))$. We define a portrait image by gridding the time signal back to this spatial domain:
\begin{equation}\label{eq: slab_def}
	d(x_\xi (t) , y_\xi (t) , z_j ) = s(t ; \, z_j). 
\end{equation}
This requires downsampling $s(t; z_j)$ once per transmit period.

To construct harmonic portraits, we isolate each harmonic using digital filters (see Appendix, equation (\ref{eq: skfilt2})), yielding $s_k$, which is the received signal filtered for the $k$th harmonic band. Then the harmonic portraits are given by
\begin{equation}\label{eq: port}
	d_k (x_\xi (t) , y_\xi (t) , z_j) = s_k(t ; \, z_j).
\end{equation}
{\color{black}Examples of harmonic portraits are shown throughout the manuscript in Figures \ref{fig: portraits}, \ref{fig: PSFs}, and \ref{fig: 3Dspiral}}. Note that this gridding transformation is effectively lossless, as most signal energy lies in the harmonics, and thus the full time-domain signal can be accurately reconstructed from the portraits. This enables full image recovery from harmonic portraits.

\subsection{Portrait PSFs and MH3D Model Overview} 
The complete mathematical model for our harmonic portrait image data is formally derived in the appendix, and the main ideas are outlined here. Each harmonic portrait is modeled as a vector-valued convolution given by\footnote{The vector valued convolution-dot product used here is defined as $\vec f * \vec g (x) = \sum_{i=1}^N f_i * g_i (x)$.}
\begin{equation}\label{eq: MH3Dcont}
	d_k(x,y,z_j ) = \vec h_k * (\rho \cdot \vec b_1)(x,y,z_j), 
\end{equation}
where $\vec h_k = (h_{kx}, h_{ky}, h_{kz})^\top$ are PSFs and $\vec b_1$ is the receive coil sensitivity. The harmonic PSFs are the portraits that would be obtained by scanning a point source at the origin. For example, if $\vec b_1 = (1,0,0)$ and the SPION object is a point source, then we would effectively measure $h_{kx}$ in the $k$th harmonic portrait. One may then think of a more general harmonic portrait with non-uniform receive field $\vec b_1$ as containing a combination of point sources and sensitivity weights giving rise to the convolutional model above.  

The discretized matrix model across harmonics $k = 2, \ldots, K$ then becomes
\begin{equation} \label{eq: matrixModel}
	\begin{bmatrix}
		d_2 \\ \vdots \\ d_K
	\end{bmatrix}
	=
	\begin{bmatrix}
		H_{2x} & H_{2y} & H_{2z}\\
		\vdots & \ddots & \vdots \\
		H_{Kx} & H_{Ky} & H_{Kz}
	\end{bmatrix}
	\begin{bmatrix}
		B_x \\ B_y \\ B_z
	\end{bmatrix}
	\rho,
\end{equation}
where 
\begin{itemize}
	\item $H_{kx}$, $H_{ky}$, and $H_{kz}$ are circulant matrices that represent the convolutional operations for each of the 3 components of $\vec h_k$.
	\item $B_x$, $B_y$, and $B_z$ are diagonal matrices containing the receive coil sensitivity weights in $\vec b_1$.  
\end{itemize} 
All operators are implemented using memory-efficient, matrix-free methods \cite{sanders2020effective, sanders2025physics}.

\begin{figure}
	\centering
	\includegraphics[width=0.8\textwidth]{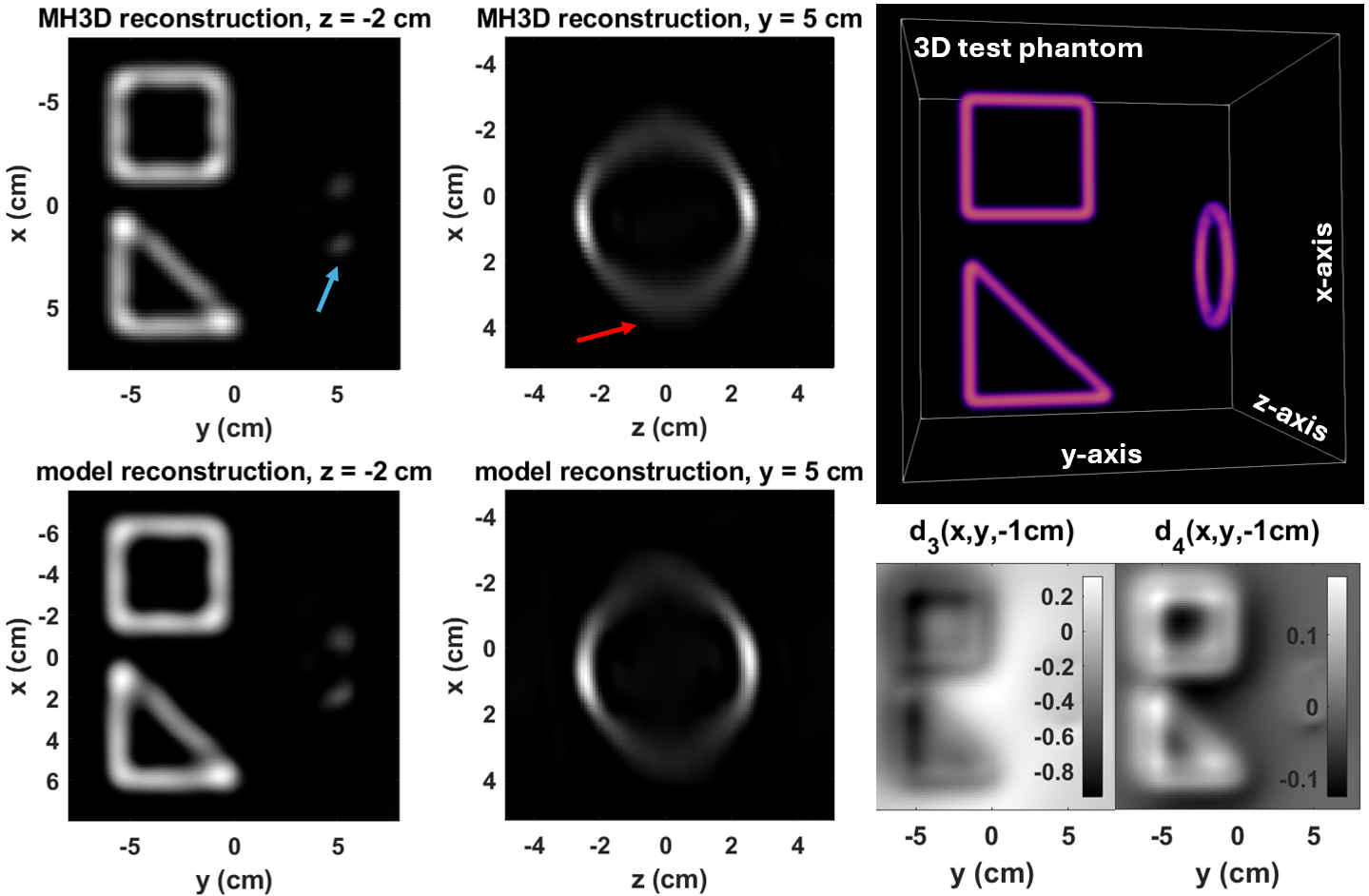}
	\caption{Comparison between model-based reconstruction \cite{sanders2025physics} and MH3D for a simulated phantom. Minor differences arise from solver details and parameter tuning. The red arrow indicates some mild anisotropic blurring in the reconstruction inherently due to a $z$-axis only transmit. The blue arrow indicates where the circle passes through the $z=-2$ cm plane and appears as two dots. The bottom right images show the 3rd and 4th harmonic portraits at $z=-1$ cm. The top right image shows a 3D visualization of the simulated phantom.}
	\label{fig: model}
\end{figure}

Figure \ref{fig: model} shows MH3D and model-based reconstructions of a phantom with square, triangle, and circle objects in orthogonal planes. The receive coil sensitivity profile given by $\vec b_1(\vec x) = (1,1,1)$ for all $\vec x$. While the full algorithmic details are deferred to later sections, this example is provided as a proof of concept and to motivate the discussion that follows. Despite minor differences, both methods yield comparable image quality, suggesting the MH3D model effectively captures all usable signal information.

In some of our applications, only the $z$-component of sensitivity is significant. In that case, the model simplifies to
\begin{equation}\label{eq: portz}
	d_k(x,y,z_j) = (h_k * (b_z \cdot \rho))(x,y,z_j),
\end{equation}
and the matrix form becomes
\begin{equation}\label{eq: matrixModelz}
	\begin{bmatrix}
		d_2 \\ \vdots \\ d_K
	\end{bmatrix}
	=
	\begin{bmatrix}
		H_{2z} \\ \vdots \\ H_{Kz}
	\end{bmatrix}
	B_z \rho.
\end{equation}

\subsection{Harmonic PSF Approximations}\label{sec: PSF}
\begin{figure}
	\centering
	\includegraphics[width=0.8\textwidth]{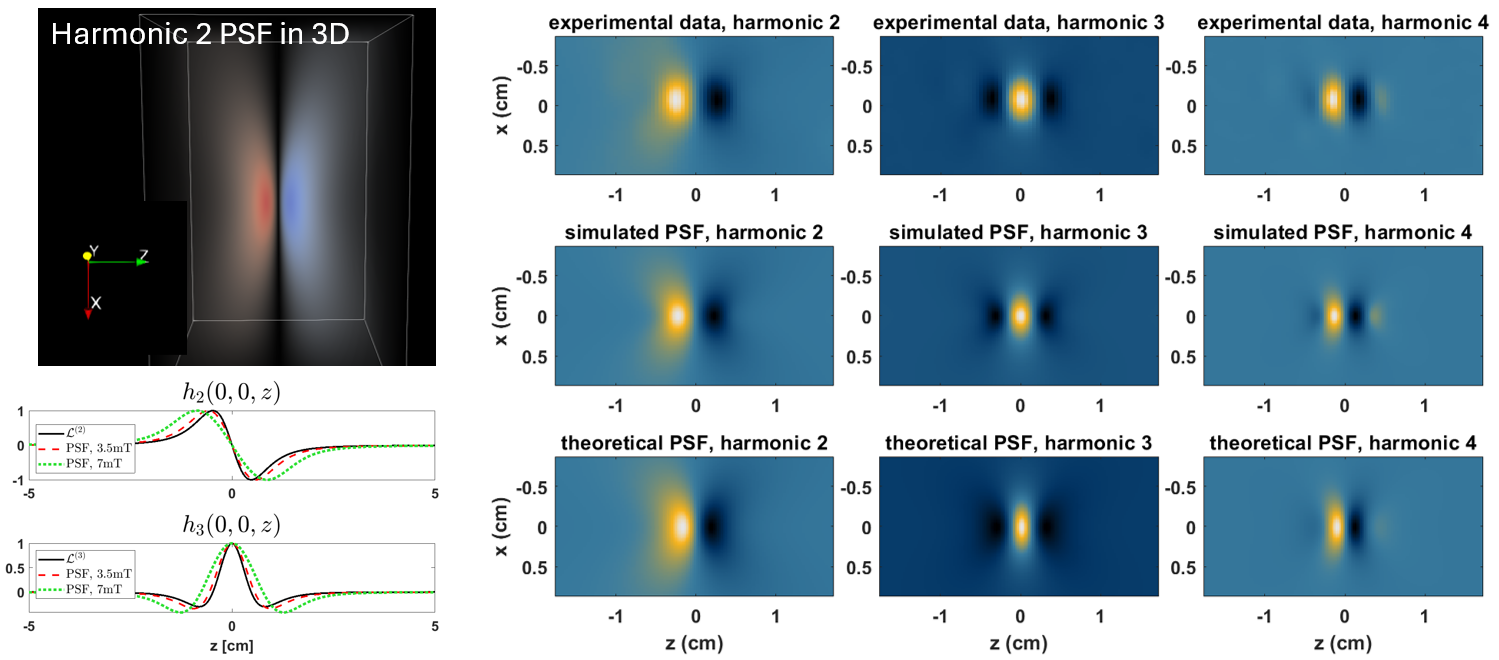}
	\caption{{\color{black}Illustration of harmonic PSF shapes. The right panels show harmonic portrait images from a 2D FFL Momentum  (Magnetic Insight, Alameda, CA) scan with a z-axis drive and receive chain at 20 mT drive amplitude. The top row contains experimental measurements of a point source, while the middle and bottom rows compare these data with simulated and theoretical PSFs, showing good agreement across harmonics 2–4. The top left panel shows a 3D rendering of the second-harmonic PSF from our 3D FFP scanner, where red and blue denote positive and negative values, respectively. The bottom left panels plot 1D PSF cross-sections at $x=y=0$ for different drive amplitudes, alongside approximations based on derivatives of the Langevin function. All curves are scaled by their respective maxima.}}
	\label{fig: PSFs}
\end{figure}

{\color{black}To most accurately model the PSFs, $\vec h_k$, used in the MH3D model, in practice we simulate the full scanner system model parameters with a uniform sensitivity. Gridding the harmonics from these simulated data onto the imaging mesh then obtains an accurate PSF used in the model.} While they can be used as black-box kernels, insight into their analytical structure has proven useful in the development of this work, including additional modeling parameters, number of harmonics needed, validity of the methodology, and development of alternative methods (see sections \ref{sec: alpha} and Appendix \ref{sec: MHAD}). 

We provide the general concept behind the harmonic PSFs in a 1D setting, while the full details are provided in the appendix. Consider the model in equation (\ref{eq: portz}) with a uniform sensitivity now written as
\begin{equation}\label{eq: hkapprox0}
	d_k(x) = h_k * \rho (x). 
\end{equation}
Then our claim (see appendix) is that the 1D harmonic PSF $h_k$ is approximated by
\begin{equation}\label{eq: hkapprox}
	h_k(x) \approx c_k \mathcal L^{(k)} (\beta x ) ,
\end{equation}
where $\mathcal L$ is the Langevin function and $c_k$ are unique coefficient constants. In 3D with $z$-axis drive/receive, the harmonic PSFs generalize to $\frac{d^k}{dz^k} h_{33}(x,y,z)$, where $h_{33}$ is the last element of the MPI PSF tensor \cite{goodwill2011multidimensional}. This approximation holds best at low transmit amplitudes (e.g. $<10$ mT, see Figure \ref{fig: PSFs}). {\color{black}This can be understood in part from the condition $|\gamma A|< \pi$ from Theorem 1 in the appendix.}

{\color{black}
Figure \ref{fig: PSFs} illustrates the PSFs obtained experimentally, in simulation, and from theoretical approximations based on derivatives of the Langevin function. The experimental data (top row) show harmonic PSFs of a point source from a 2D FFL Momentum scanner (Magnetic Insight, Alameda, CA) at 20 mT, while the simulated and theoretical results (middle and bottom rows) display close agreement with these measurements across harmonics 2–4. The bottom-left panels show 1D cross-sections of the 3D PSFs at $x=y=0$, highlighting that the Langevin derivative approximation is most accurate at lower drive amplitudes. Quantitatively, we typically observe relative fitting errors between the data and the reconstruction model of 10–20\% for high SNR data sets, which is consistent with the level of agreement visible in Figure \ref{fig: PSFs}.
	
}

\section{Additional Modeling Considerations}\label{sec: additional}
{\color{black}
This section outlines the practical components of the MH3D reconstruction framework, each addressing a specific aspect of data handling and signal processing on our scanner. In Section \ref{sec: sampling}, we describe downsampling strategies that preserve model accuracy while maintaining adequate resolution. Section \ref{sec: bdy} describes a related boundary padding method used to mitigate edge artifacts, and Section \ref{sec: alpha} presents a practical regularization scheme to reduce artifacts arising from the missing first harmonic. Section \ref{sec: runtime} outlines the numerical methods and demonstrates that the algorithm reconstructs 3D volumes in only seconds. Finally, in Section \ref{sec: phase}, we detail the phase calibration procedure that ensures model consistency across harmonics.
}

\subsection{Downsampling Methods for Accurate Modeling}\label{sec: sampling}
In an ideal setting, the portrait data $d_k(x,y,z)$ would be sufficiently sampled in the discretized $(x,y,z)$ space to approximate the true portrait in the continuous space. However, in our system and many others, the $z$-axis is encoded via stepped mechanical translation, and due to the desire to limit scan acquisition times, the number of scanned $z$-locations is typically kept relatively low. In our case, the default distance between neighboring $z$-locations used is $\Delta z =5$ mm. At this distance the discretization of the continuous convolutional model along the $z$-axis can result in an undersampled model. The right images of Figure \ref{fig: mesh} demonstrate this by showing the PSF approximations when sampled at 1 mm and then at 5 mm. It can clearly be seen here that using the 5 mm mesh would notably undersample the PSF.

\begin{figure}
	\centering
	\includegraphics[width=0.75\textwidth]{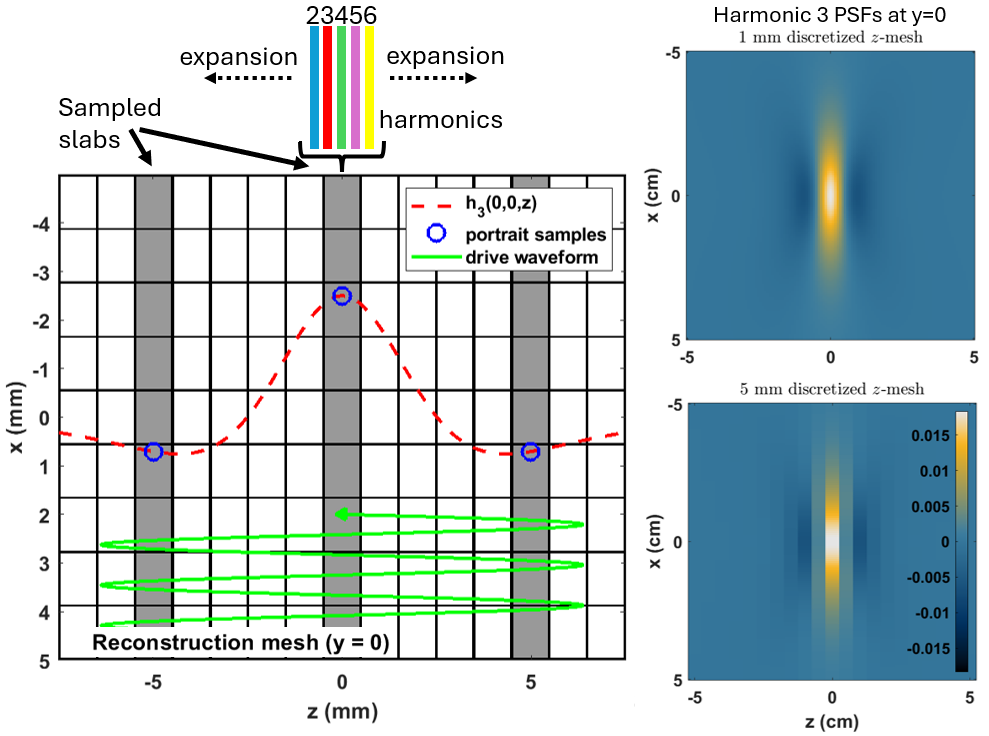}
	\caption{Imaging reconstruction mesh (left) showing the sampled locations shaded in gray. The forward model uses the full high resolution mesh, while only downsampling to the portrait slabs as the final operation. The 3rd harmonic PSF at $x=y=0$ is overlaid onto the mesh to demonstrate how discretizing the full forward model at only the scanned locations would undersample the PSFs, resulting in an inaccurate model. This is also demonstrated in the right images showing the PSFs at 1 mm resolution and 5 mm resolution in $z$.  However, with multiple harmonics, the model can expand the information outward to positions between the measured locations.}
	\label{fig: mesh}
\end{figure}

To overcome this issue, also demonstrated in Figure \ref{fig: mesh}, we retain a high resolution model for all of the forward operations, while simply applying a downsampling operation to the measured $z$ locations as the final component of our model. In this way we are able to accurately approximate the continuous model at high resolution, while only downsampling to the actual scanned locations at the final stage. We have empirically found $\Delta z \approx 1$ mm to be sufficient to accurately approximate the continuous model. Extending the matrix model given in (\ref{eq: matrixModel}) to this downsampling method arrives at
\begin{equation}\label{eq: dN}
	\underbrace{
		\begin{bmatrix}
			d_2 \\ d_3\\\vdots \\ d_K
		\end{bmatrix}
	}_{:=\mathbf d}
	= \underbrace{\underbrace{\begin{bmatrix}
				P & 0 & \dots & 0\\
				0 &  P & \dots & 0\\
				\vdots & & \ddots & 0\\
				0 & 0 & \dots & P
			\end{bmatrix}
		}_{:= \mathbf P}
		\underbrace{
			\begin{bmatrix}
				H_{2x} & H_{2y} & H_{2z}\\
				H_{3x} & H_{3y} & H_{3z}\\
				\vdots & \ddots & \vdots \\
				H_{Kx} & H_{Ky} & H_{Kz}\\
			\end{bmatrix}
		}_{:= \mathbf H}
		\underbrace{
			\begin{bmatrix}
				B_x \\ B_y \\B_z
		\end{bmatrix}}_{\mathbf B}
	}_{\mathbf A:= \mathbf{P H B}}
	\rho ,
\end{equation}
where $P$ downsamples the result output from the first two high-resolution operators to only those $z$ locations acquired in the scan. The full operator, $\mathbf P$, is written as a banded diagonal matrix, since this operation is applied for each harmonic. In matrix form, $P$ is called a \textit{row-selector} matrix, which contains only some rows of the identity matrix, i.e. a downsampling operation.

Note that using this model for the reconstruction also recovers $\rho$ at the higher-resolution as defined by the first two operations, which generally improves the visual image quality. If we were using only a single harmonic, this added resolution may only provide additional pixels but not actual added resolution. However, the additional harmonics provides feasibility that this resolution is actually attainable, which we have observed empirically in simulations and with experimental data. The actual attainable resolution is highly system and scan dependent and should therefore be assessed accordingly.

One can also interpret this reconstruction process as a type of regularized expansion along the $z$-axis, implicitly using the information encoded in the multiple harmonics. According to equation (\ref{eq: hkapprox}), each harmonic portrait can be viewed as encoding successive derivatives of the underlying signal. This allows the reconstruction to approximate unsampled locations by leveraging a Taylor-like expansion. For example, from equations (\ref{eq: hkapprox0}) and (\ref{eq: hkapprox}), we can estimate the signal at a neighboring point $x + \delta$ as:
\begin{equation}\label{eq: tlr}
	\mathcal L_\beta' * \rho(x + \delta) \approx \sum_{k=1}^K c_k^{-1} d_k(x) \frac{\delta^k }{ k! },
\end{equation}
where $\mathcal L_\beta'(x) = \mathcal L'(\beta x)$ and $d_k$ is the $k$-th harmonic portrait. Thus, the MH3D model enables extrapolation between sampled slabs using higher-order harmonic information\footnote{Note that the $k=1$ term is missing due to system filtering, as discussed in Section \ref{sec: alpha}.}.

Alternatively, from a linear algebra perspective, given $K-1$ harmonics, we can, in theory, resolve up to $K-1$ independent locations per scanning position. More precisely, suppose we collect $N_z$ portrait slabs. Then using harmonics 2 through $K$ provides $(K-1)N_z$ effective measurements. A square linear algebra system is obtained by setting the reconstructed image resolution to $N_x \times N_y \times (K-1)N_z$. For instance, using 5 harmonics ($K=6$) with slabs sampled at $\Delta z = 5$ mm yields an effective 1 mm resolution in $z$.

Finally, the underlying intuition aligns with classical sampling theory. As Shannon noted in his foundational work \cite{shannon1949communication}, "One can further show that the value of the function and its derivative at every other sample point are sufficient. The value and first and second derivatives at every third sample point give a still different set of parameters which uniquely determine the function. Generally speaking, any set of 2TW independent numbers associated with the function can be used to describe it." MH3D leverages a similar principle, with each harmonic contributing derivative-like information that enables reconstruction beyond the raw sampling rate.

\subsubsection{Boundary Image Padding}\label{sec: bdy}
To reduce wrap-around artifacts from FFT-based convolutions, we apply boundary padding to the imaging FOV. Since FFTs are inherently periodic, unpadded convolutions can introduce edge distortions. To mitigate this, we extend the FOV with extra pixels along the boundaries, which are included in the forward model but cropped after reconstruction. These padding regions are not part of the portrait data and are discarded in post-processing. The extended model follows the same formulation as equation (\ref{eq: dN}), so we omit a separate derivation.

\begin{figure}
	\centering
	\includegraphics[width=0.8\textwidth]{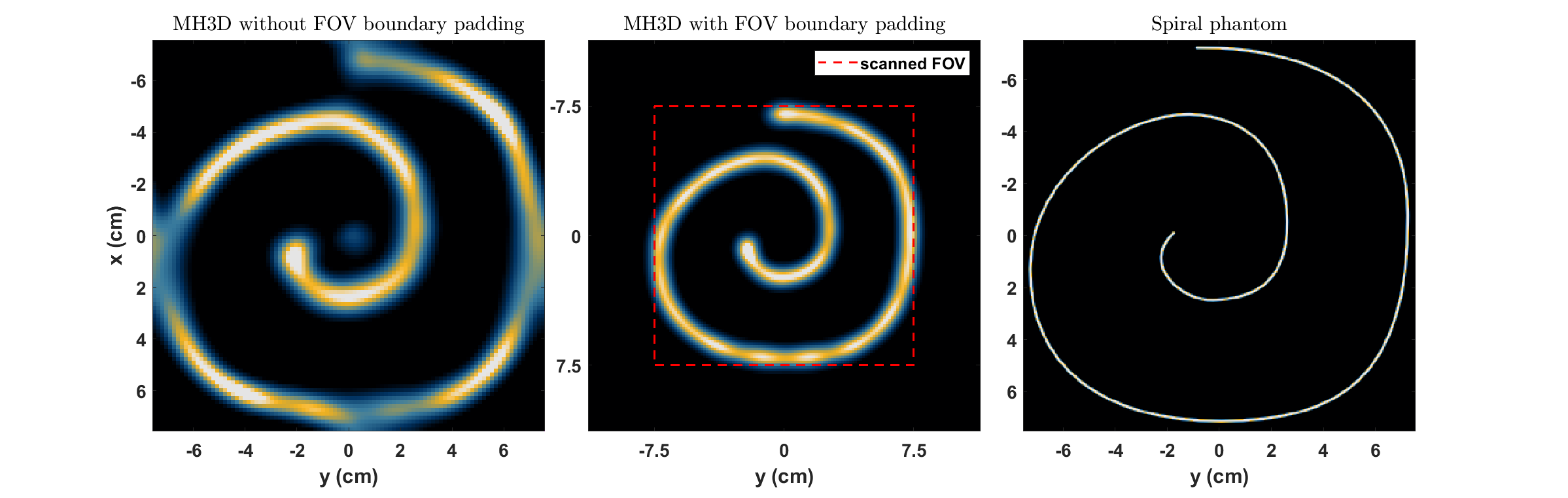}
	\caption{MH3D reconstructions with and without padding for a 2D spiral phantom near the FOV edge. Padding eliminates distortion and edge artifacts. Pixels outside the red FOV box are discarded.}
	\label{fig: bdPad}
\end{figure}

Figure \ref{fig: bdPad} shows reconstructions with and without padding. In the unpadded case, SPIONs near the edge cause distortion, blur, and spurious signals at the center. Padding removes these effects and improves image quality. Outer pixels, shown outside the red box, are discarded after reconstruction.

\subsection{Image Baselining and Artifact Reduction}\label{sec: alpha}
Given data vector $\mathbf d$ and forward model $\mathbf A$ as in (\ref{eq: dN}), we solve the inverse problem
\begin{equation}\label{eq: Amodel}
	\rho^* = \arg \min_{\rho \ge 0}\| \mathbf A \rho - \mathbf d \|_2^2 + \lambda R(\rho),
\end{equation}
where $R$ is a regularizer based on prior assumptions about $\rho$. In other applications we have often used a second-order Tikhonov term,
\begin{equation}
	R(\rho) = \| T \rho \|_2^2,
\end{equation}
with $T$ a finite-difference operator to encourage smoothness \cite{sanders2020effective}. 

\begin{figure}
	\centering
	\includegraphics[width=0.8\textwidth]{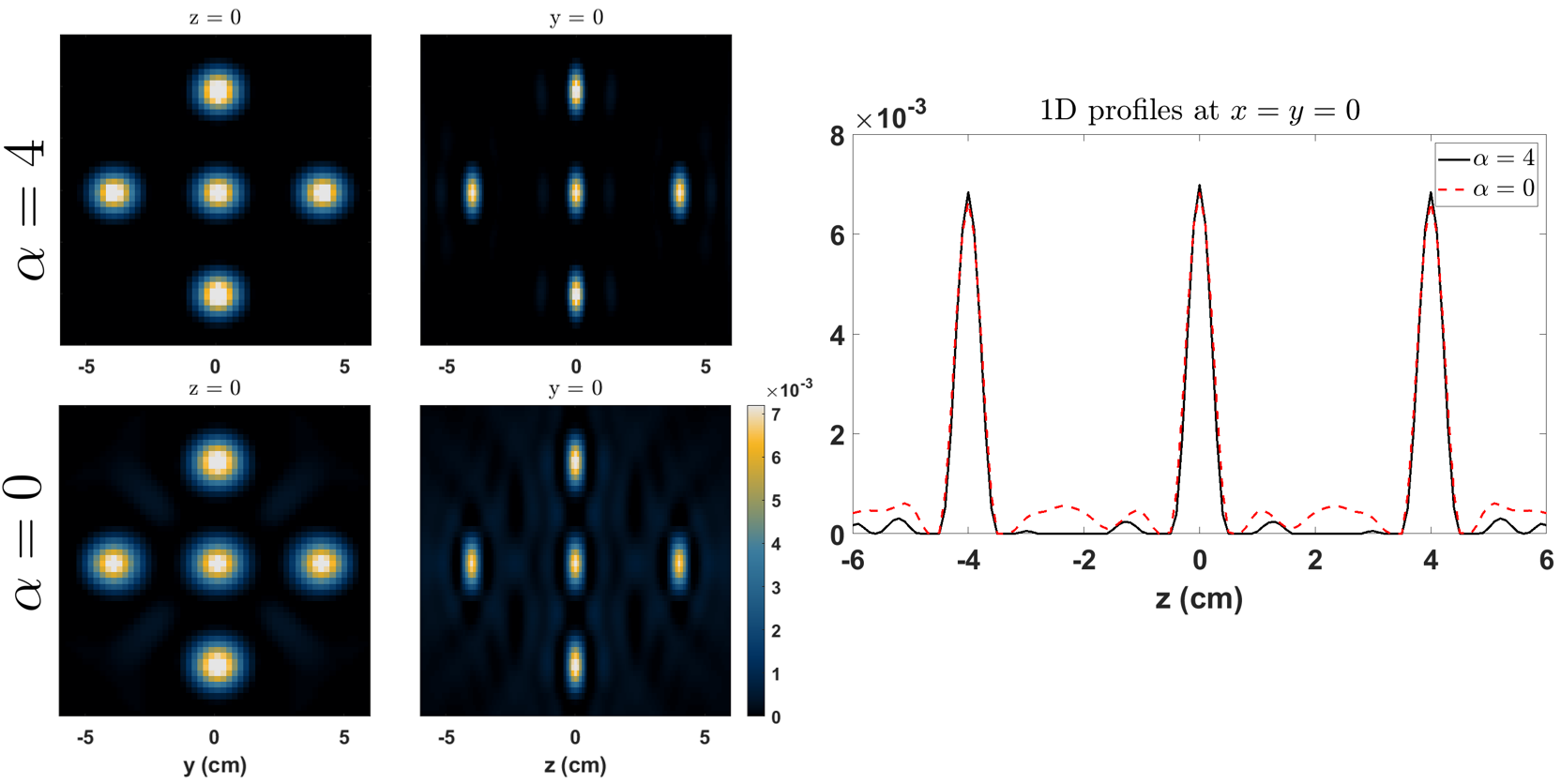}
	\caption{Simulated example demonstrating the effect of the $\alpha$ parameter on background suppression and artifact reduction. A 3D phantom of point sources is shown with $\alpha = 0$ (left) and $\alpha = 4$ (right). The phantom image contains point source objects of uniform concentration, with one source $(0,0,0)$ and the others located on the sphere of radius 4 cm centered around the origin.}
	\label{fig: alpha}
\end{figure}

However, MPI harmonic data lacks a DC component due to filtering, resulting in an unknown background level. In MH3D, this emerges from the structure of the PSFs. For example, in 1D with a uniform sensitivity $b_1 = 1$, the second harmonic portrait is approximately the second derivative of the Langevin PSF convolved with the nanoparticle density:
\begin{equation}
	d_2(x) = \mathcal L'' * \rho(x) = \frac{d}{dx} \left[\mathcal L' * \rho(x) \right].
\end{equation}
Recovering the native image $\rho_N(x) = \mathcal L' * \rho(x)$ via integration yields
\begin{equation}\label{eq: 2AD}
	\rho_N(x) = \int d_2(x) \, dx = \mathcal L'*\rho(x) + C,
\end{equation}
where $C$ is an unknown constant. This constant is analogous to the missing background and edge zero-pinning implemented in classical X-space MPI \cite{goodwill2011multidimensional}.

To compensate, we introduce a prior that penalizes intensity near the image boundaries. Let $P_\alpha$ be a row-selector matrix that selects these boundary pixels. Then we define the regularizer as
\begin{equation}\label{eq: finalR}
	R(\rho) = \| T \rho \|_2^2 + \alpha \| P_\alpha \rho \|_2^2.
\end{equation}
This $\alpha$ term enforces prior knowledge of the background constant and reduces ringing artifacts. Figure \ref{fig: alpha} shows a simulated example where increasing $\alpha$ drives the background closer to zero, improving overall image quality.

\subsection{Computational Methods and Runtime Performance}\label{sec: runtime}
{\color{black}
\begin{table}[t]
	\centering
	\caption{Efficient implementations of operators in the MH3D solver.}
	\label{tab:ops}
	\renewcommand{\arraystretch}{1.2}
	\begin{tabular}{lp{9cm}}
		\hline
		\textbf{Operator} & \textbf{Efficient implementation / Notes} \\
		\hline
		$\mathbf{B}$ & Diagonal matrix (Hadamard product) with precomputed coil map(s). \\
		$\mathbf{H}_k$ & FFT-based convolution: $u \mapsto \mathcal{F}^{-1}(\widehat H_k \odot \mathcal{F}u)$ with $\widehat H_k$ precomputed. \\
		${P} ,{P}_\alpha$ & Row selection / gather (no arithmetic). \\
		${T}^\ast{T}$ & Fourier diagonalization: ${T}^\ast{T}=\mathcal{F}^{-1}|\widehat T|^2\mathcal{F}$ (two FFTs). \\
		\hline
	\end{tabular}
\end{table}
At this stage, the full MH3D framework has been established, and we turn to the computational methods required to solve the reconstruction problem. Considering equations (\ref{eq: Amodel}) and (\ref{eq: finalR}), the optimization problem takes the form
\[
\min_{\rho \ge 0}\| \mathbf A \rho - \mathbf d \|_2^2 + \lambda \cdot \left(\| T \rho \|_2^2 + \alpha \| P_\alpha \rho \|_2^2\right).
\]
This can be equivalently expressed as a single least-squares problem:
\[
\min_{\rho \ge 0} \| M \rho - b \|_2^2,
\]
with
\[
M = 
\begin{bmatrix}
	\mathbf{A} \\ \sqrt{\lambda}\, T \\ \sqrt{\lambda \alpha}\, P_\alpha 
\end{bmatrix},
\quad
b = 
\begin{bmatrix}
	\mathbf d \\ \vec 0 \\ \vec 0
\end{bmatrix}.
\]

The optimization is solved via accelerated projected gradient descent (e.g., \cite{beck2009fast,su2016differential}). The $(k+1)$-st iteration is given by
\begin{equation}
	\begin{split}
		y^{k} &= \rho^k + \frac{k-1}{k+2}(\rho^k - \rho^{k-1}), \\
		\rho^{k+1} &= \max( y^k - \tau M^*(M y^k - b), \; 0).
	\end{split}
\end{equation}

The overall efficiency is dominated by the cost of evaluating matrix-vector products with $M$ and $M^*$. To that end, each of the operators involved can be computed efficiently by either elementwise Hadamard products or FFT-based convolutions (see the methods in \cite{sanders2018multiscale,sanders2020effective} for details), and they are further significantly accelerated by evaluation on modern GPU hardware. A brief summary of how each operator is evaluated is provided in Table \ref{tab:ops}.

\begin{table}[t]
	\centering
	\caption{Runtime evaluation of MH3D reconstructions under different test cases. The evaluations were run in MATLAB using an NVIDIA GeForce RTX 3070 Laptop GPU.}
	\label{tab:runtime}
	\renewcommand{\arraystretch}{1.2}
	\begin{tabular}{ccccc}
		\hline
		\textbf{\# z-slabs} & \textbf{Image dimensions} & \textbf{\# Harmonics} & \textbf{Iterations} & \textbf{Total runtime} \\
		\hline
		21  & $100 \times 100 \times 121$   & 4 & 500 & 27.0 sec \\
		31  & $100 \times 100\times 151$   & 4 & 500 & 35.0 sec \\
		41 & $100 \times 100 \times 241$  & 4 & 500 & 46.4 sec \\
		\hline
	\end{tabular}
\end{table}

Representative runtimes for three test cases are summarized in Table~\ref{tab:runtime}. These results demonstrate that MH3D reconstructions can be performed very efficiently on standard workstation hardware, reconstructing high 3D spatial resolutions in only seconds.
}

\subsection{Portrait Phase Calibration}\label{sec: phase}

Harmonic filtering returns complex-valued signals $s_k$, resulting in complex harmonic portraits $d_k$. However, our model assumes real-valued PSFs and portraits (e.g., Figure~\ref{fig: PSFs}), so phase correction is necessary.

Several system factors introduce phase shifts including analog filters, harmonic filter design, transmit waveform phase, and receive chain delays. These factors introduce an unknown but measurable phase change in the portraits. Tracking individual phase contributors is impractical. Therefore, rather than modeling each source of phase error, we apply a simple correction directly to the portraits.

Empirically, we observe that the phase across each narrow harmonic band is approximately constant. Thus, we model the corrupted portrait with a constant phase offset by
\begin{equation}
	\widetilde{d}_k = e^{i\theta_k} d_k,
\end{equation}
where $\theta_k$ is the unknown constant phase.

We estimate $\theta_k$ by measuring the angle of $\widetilde{d}_k$ with high SNR samples, i.e.
\begin{equation}
	\theta_k \approx \text{angle}(\widetilde{d}_k(x,y,z)),
\end{equation}
using hand-selected regions. Because the true portrait may have both positive and negative values, a phase ambiguity of $\pi$ may remain. We resolve this by testing a reconstruction with one phase value; if necessary, we flip the phase. This calibration is performed once per receive channel and remains stable due to closed-loop hardware synchronization.


\section{Results}
\subsection{Experimental Data}
We provide results using our MH3D reconstruction model on experimental data containing VivoTrax (Magnetic Insight, Inc., Alameda, CA.) with the 3D FFP scanner described in Section \ref{sec: scanner}. The $z$-slab spacing for all data sets is $\Delta z = 5$ mm, and the reconstructions utilized harmonic portraits 2-5.

{\color{black}
	The first example, shown in Figure~\ref{fig: 3Dspiral}, is a test phantom consisting of a 3D helical spiral taped around a head mannequin, covering most of the imaging FOV. Maximum intensity projections (MIPs) are displayed in panels (a) and (b), where the maximum is taken along a single axis, and a photograph of the phantom is provided in panel (c) for comparison. In this experiment, we compare MH3D with a model-based reconstruction and with a reconstruction using only the 3rd harmonic, a common approach in prior work \cite{mason2022side,janssen2022single,nomura2024development,mcDonough2024tomographic}. The model-based reconstruction could only be performed by first applying artifact correction and phase calibration in the portrait domain, followed by mapping the portraits back to the time-domain. A side effect of this procedure is a small signal dropout in this example, highlighted by the red arrow in panel (a). Otherwise, the model-based reconstruction is qualitatively similar to MH3D, with slightly sharper resolution. In contrast, the 3rd-harmonic-only reconstruction, while implemented within our MH3D framework, shows substantial blurring and artifacts, underscoring the importance of incorporating multiple harmonics. This example also illustrates the potential of MPI for future head imaging applications, facilitated by advanced reconstruction tools such as MH3D.
}

\begin{figure}
	\includegraphics[width=1\textwidth]{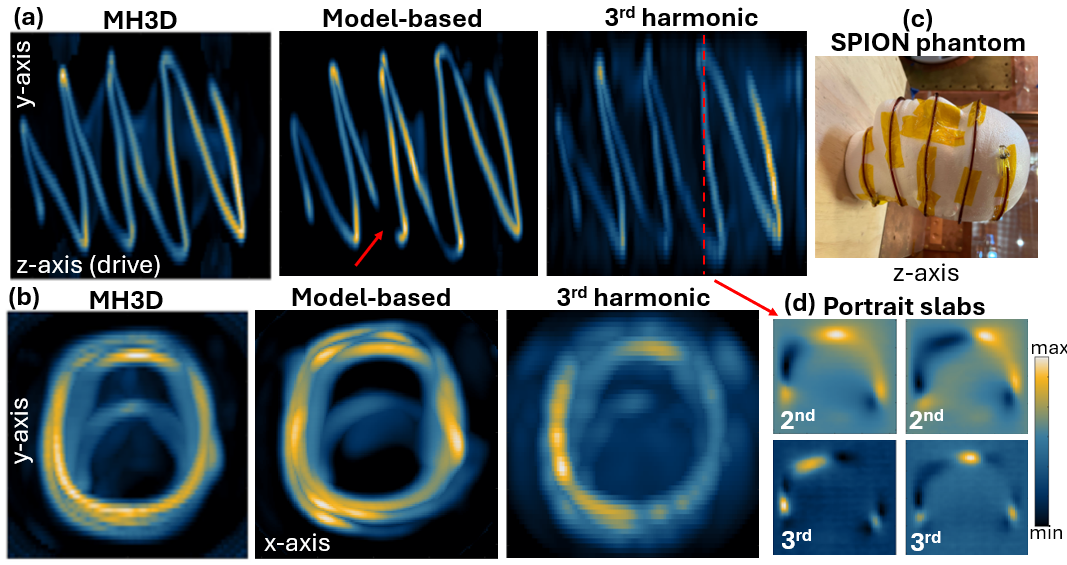}
	\caption{{\color{black}Reconstructions of a 3D helical spiral phantom taped to a human head mannequin, comparing MH3D with a model-based reconstruction and with a reconstruction using only the 3rd harmonic. (a–b) Maximum intensity projections (MIPs) of the reconstructions along the x-axis (a) and z-axis (b). (c) Photograph of the phantom. (d) Portrait slabs for the 2nd and 3rd harmonics extracted along the red dashed line in (a). The model-based reconstruction required artifact correction and phase calibration in the portrait domain before mapping back to the time-domain, which resulted in a signal dropout (red arrow).}}
	\label{fig: 3Dspiral}
\end{figure}

{\color{black}
Figure~\ref{fig: DR} presents a second experimental example designed to evaluate image quality under a high dynamic range with clinical applications in mind \cite{sehl2024magnetic}. A head phantom mimics lymph-node imaging after a tracer injection: the injection site is placed at the top of the head (28 mg VivoTrax), three mock lymph nodes are located near the left ear (each 100 $\mu$g), and a reference vial (700 $\mu$g) is mounted on the right side; the phantom is shown in the photograph in the lower-right panel. The MIP reconstruction images comparing MH3D and the model are shown with adjusted contrast to visualize the wide dynamic range. These images show that both MH3D and the model-based method clearly resolve all targets. A notable difference is a localized distortion at the strong injection site in the model-based result, which is again a likely artifact introduced when portrait-domain corrections are mapped back to the time-domain for the model reconstruction.
}

\begin{figure}
	\centering
	\includegraphics[width=0.75\textwidth]{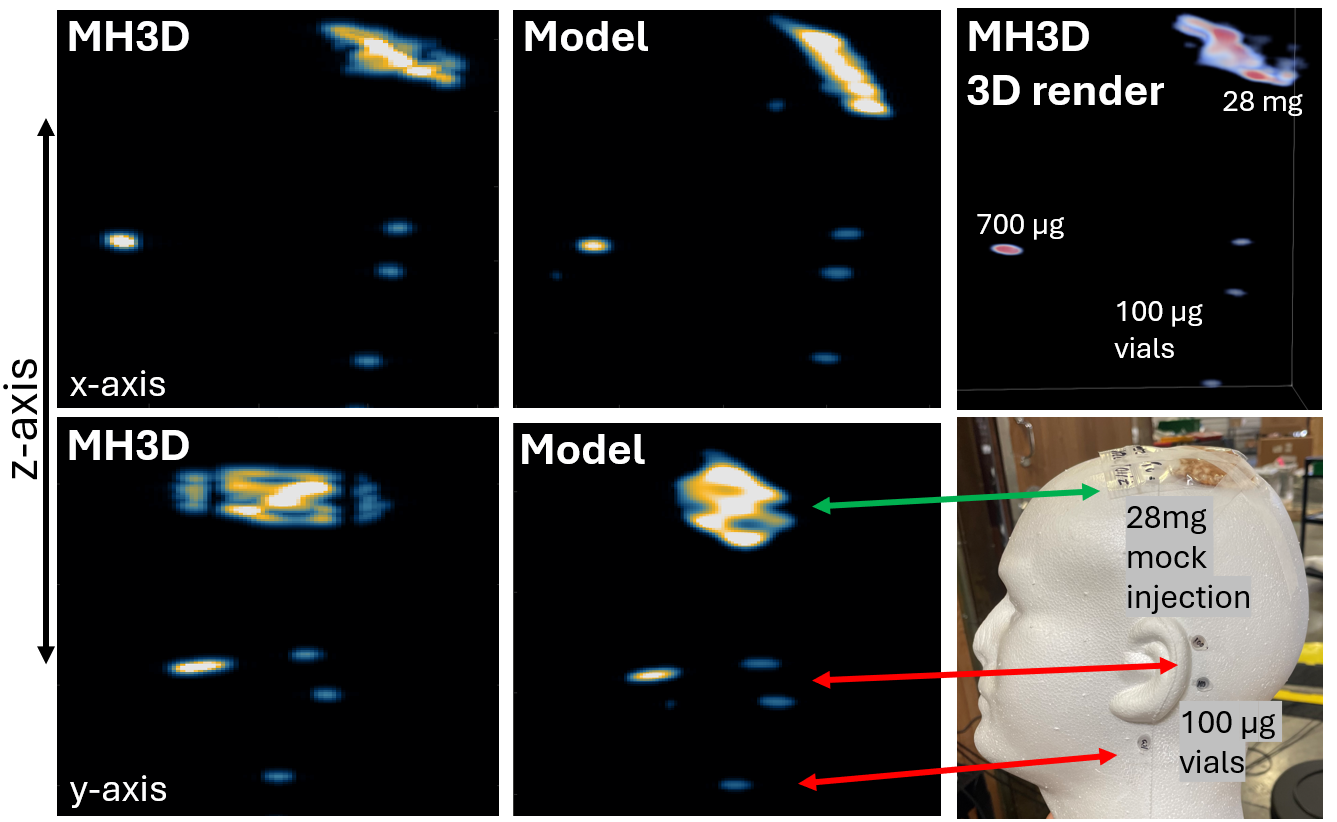}
	\caption{Experimental example on a mock lymph node imaging phantom with an injection comparing MH3D and the model reconstruction. The photo on the bottom right shows the phantom and the arrows point to the corresponding objects in the phantom found in the reconstructed images.}
	\label{fig: DR}
\end{figure}

\subsection{Simulated 2D Projection Reconstruction}
In this section, we present a simulated example of MH3D applied to a 2D FFL projection geometry, which we refer to as MH2D. This demonstration highlights how the MH3D framework can be generalized to alternative scanning geometries and enables quantitative comparison with other reconstruction methods. The simulations mimic the acquisition scheme of a production preclinical MPI scanner (Momentum, Magnetic Insight, Alameda, CA). In this scanner, the FFL focus fields raster in a zigzag Cartesian-like trajectory. The focus field shifts in the $xz$ plane while the FFL direction is projected along the $y$-axis. This scan pattern is repeated twice, with the drive field alternating between the $x$- and $z$-axes for each pass, and the receive coil aligned collinearly with the driven axis.

{\color{black}
The test image used for this simulation is shown on the right of Figure~\ref{fig: mh2d} and consists of a spiral arrangement of small point-like sources. To compare reconstruction methods, we applied each algorithm over a wide range of regularization parameters ($\lambda$), with significant white noise added to the simulated signal data to ensure all reconstructions exhibited unavoidable noise artifacts. Two representative cases, $\lambda = 2.6$ and $\lambda = 10^3$, are shown in the top panels of Figure~\ref{fig: mh2d}. As in previous examples, we compare MH2D with the model-based reconstruction \cite{sanders2025physics} and with a 3rd-harmonic-only reconstruction.

The right panel of Figure~\ref{fig: mh2d} shows three quantitative metrics across the tested $\lambda$ values. From top to bottom, these are: the full-width-half-maximum (FWHM), a signal-to-noise ratio (SNR) defined as the average peak signal intensity across SPION sources divided by the background standard deviation, and an alternative SNR defined as the peak signal divided by the peak background noise. The first SNR metric is conventional, while the second highlights the likelihood of falsely interpreting background noise as signal. Across both images and metrics, the 3rd-harmonic-only reconstructions perform substantially worse than the alternative methods. The MH2D and model-based reconstructions appear visually similar at both $\lambda$ values, with nearly identical FWHM curves across the entire range. However, both SNR metrics show a modest advantage for the model-based approach, consistent with our own visual inspection, which revealed slightly more noise artifacts in the MH2D images after contrast adjustment. The cause of this difference is not fully understood, but we suspect it may reflect tuning refinements that have been extensively optimized for MH3D and the 2D model-based reconstructions but not yet for MH2D. Overall, in terms of image quality, object similarity, and FWHM, MH2D and the model-based method perform nearly equivalently.

For very small $\lambda$ (effectively no regularization), the metrics for all three methods converge, but the resulting images are not usable in practice because peak background noise approaches the peak signal. Hence, our analysis emphasizes the critical importance of regularization, which is standard practice in most imaging inverse problems.
}

\begin{figure}
	\centering
	\includegraphics[width=1\textwidth]{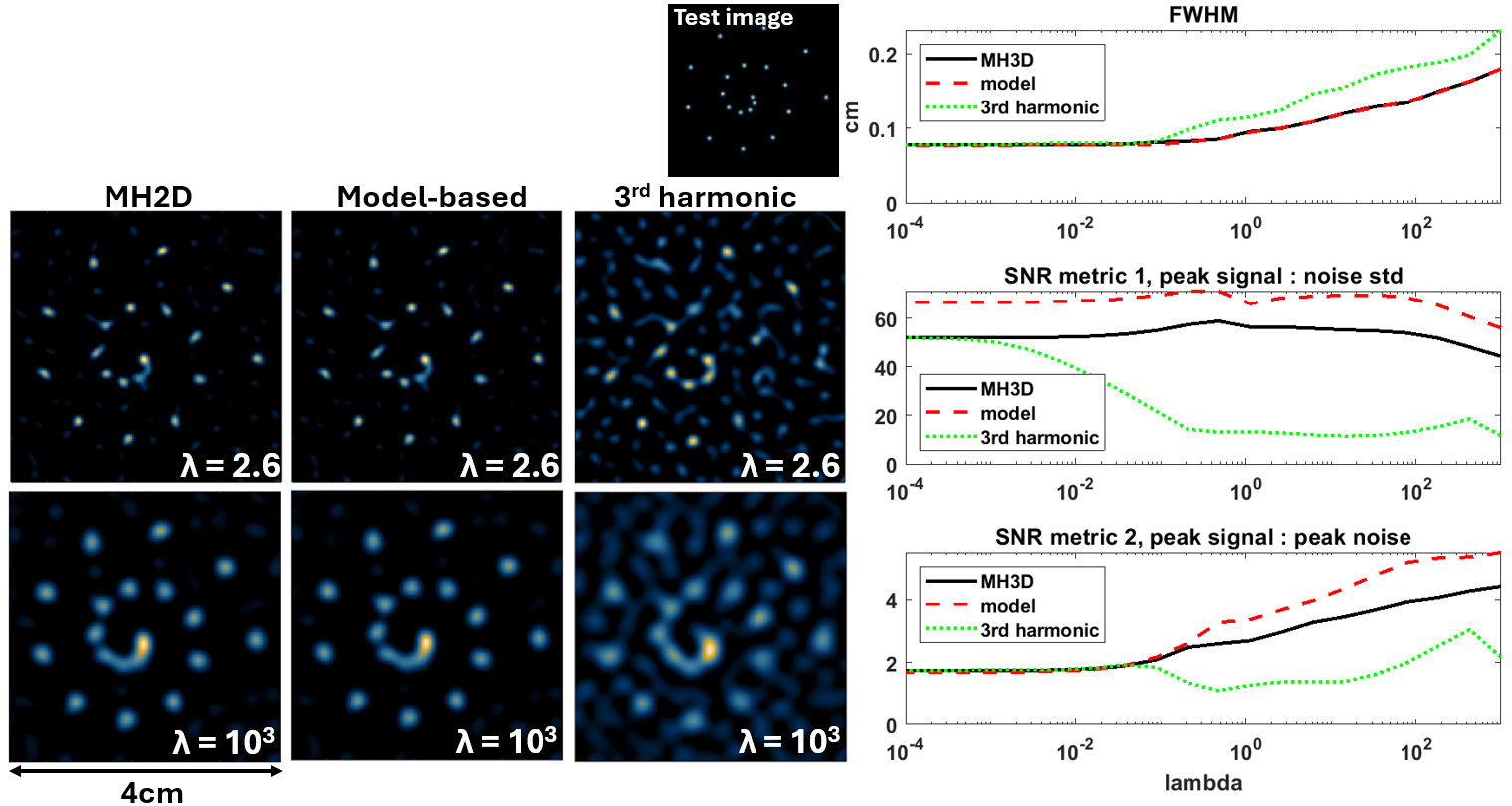}
	\caption{{\color{black}Simulated reconstruction study demonstrating the extension of the MH3D framework to a 2D projection scan geometry emulating data from a Momentum scanner (Magnetic Insight, Inc., Alameda, CA, USA). Reconstructions of a spiral test phantom are shown for MH2D, the model-based method, and a 3rd-harmonic-only approach at two regularization parameters ($\lambda = 2.6$ and $\lambda = 10^3$). The right panel summarizes quantitative metrics across a broad range of $\lambda$ values: (top-right) full-width-half-maximum (FWHM), (center-right) signal-to-noise ratio (SNR) defined by the mean peak signal over the background standard deviation, and (bottom-right) an alternative SNR defined by the peak signal over the peak background noise.}}
	\label{fig: mh2d}
\end{figure}

\section{Summary}
We have developed a new efficient model-based reconstruction for MPI to work in a common gridding domain, which we have coined as the \textit{portrait} domain. While this harmonic domain is not new to the field, we have developed a complete mathematical framework that is both computationally efficient and captures the full breadth of information in the MPI data. Our theoretical and empirical results show that the image reconstruction performance is comparable to a recently developed physics-based MPI model \cite{sanders2025physics}. However, the new method provided here offers significantly improved data visualization, facilitating enhanced analysis and debugging, which have proven invaluable in our work. The drawback of this new method is that it requires additional re-engineering for new scanner configurations and pulse sequences, while the former model is easily amenable. Therefore, we believe that both methods will continue to have powerful utility within the field of MPI. Finally, the development of the new MH3D model has led to invaluable new theoretical insights into the nature of the MPI signal and the information attained at each of the higher harmonics, which we lay out in detail in the appendix. 

\appendix

\section{Analysis of the MH3D Harmonic PSFs}\label{sec: analysis}
In this section we provide some theoretical analysis and insight into the harmonic PSFs arising from the MH3D model. These results allow us to understand and develop the image reconstruction models, while also providing new interesting theoretical insights into the harmonic analysis of MPI signals. 

In all of the results that follow, we consider a 1D MPI model\footnote{The results effectively apply in 2D and 3D, but would need to be modified so that the drive axis matches the axis of our 1D case used here. These derivations are provided later in the appendix.} with a static linear magnetic field given by $Gx$ and SPION(s) with a conglomerate coefficient $\beta$ (see Section \ref{sec: scanner} for details).  We also reduce the Langevin function coefficient constants to a single coefficient defined by $\myC := \beta \cdot G$, so that the SPION response to the applied magnetic field is
$$
\mathcal L [\beta   \| H \|] = \mathcal L[\myC (\xi(t) - x)].
$$
For simplicity in the derivations, we allow for our drive to take on a complex exponential waveform, which does not have an obvious real physical interpretation. However, it greatly simplifies the mathematical derivations, and similar but more tedious results are obtained with real trigonometric waveforms.

The following is the main result which provides an analytical decomposition of an MPI received signal into its harmonics. Among other things, this result shows us that the information at the $k$th harmonic effectively samples the $k$th derivative of the Langevin. The proof of this result is provided later in Appendix \ref{sec: proof} to keep the flow of these results. 
\begin{theorem}\label{prop main}
	Suppose the FFP trajectory is given by a complex trigonometric waveform and a linear shift focus field given by
	\begin{equation}
		\xi(t) = A e^{i2\pi f_0 t} + t \cdot \Delta_t,
	\end{equation}
	where $|\myC A| < \pi$ and $\Delta_t$ is a linear shift rate constant, and let the SPION density be as single point source written as $\rho(x) = \delta(x - x_0)$. Then the noise-free and unfiltered MPI received signal is given by
	\begin{equation}\label{eq: sExpansion}
		\begin{split}
			s_0(t)
			& = i2\pi m f_0\, b_1(x_0)\sum_{k=1}^\infty
			\frac{(\myC A)^{k}}{(k-1)!} \mathcal L^{(k)}\left[\myC (t \Delta_t - x_0)\right]{e^{i2\pi f_0 k t}} + O(\Delta_t ).
		\end{split}
	\end{equation}
\end{theorem}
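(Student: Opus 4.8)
The plan is to compute the received signal directly from its integral definition and expand the Langevin response in a Fourier/Taylor-like series in the drive amplitude. Starting from
\[
s_0(t) = m\frac{d}{dt}\int \rho(x)\, b_1(x)\,\mathrm{sgn}\big(\xi(t)-x\big)\,\mathcal L\big(\myC|\xi(t)-x|\big)\,dx,
\]
the point-source assumption $\rho(x)=\delta(x-x_0)$ collapses the integral, leaving
\[
s_0(t) = m\,b_1(x_0)\,\frac{d}{dt}\,\widetilde{\mathcal L}\big(\myC(\xi(t)-x_0)\big),
\]
where $\widetilde{\mathcal L}(u) := \mathrm{sgn}(u)\,\mathcal L(|u|)$ is the odd extension (which, since $\mathcal L$ is already odd, is just $\mathcal L$ itself on the real line). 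Writing $\xi(t)-x_0 = A e^{i2\pi f_0 t} + (t\Delta_t - x_0)$, I would Taylor-expand $\mathcal L\big(\myC(\xi(t)-x_0)\big)$ about the slowly-varying base point $\myC(t\Delta_t - x_0)$ in powers of the fast term $\myC A e^{i2\pi f_0 t}$:
\[
\mathcal L\big(\myC(\xi(t)-x_0)\big) = \sum_{k=0}^\infty \frac{(\myC A)^k}{k!}\,\mathcal L^{(k)}\big(\myC(t\Delta_t-x_0)\big)\,e^{i2\pi f_0 k t}.
\]
The condition $|\myC A|<\pi$ is what I expect to be needed to justify convergence of this series; more precisely, since $\mathcal L(u)=\coth(u)-1/u$ has poles at $u = i\pi n$ for nonzero integers $n$, the radius of convergence of the Taylor series at a real base point is $\pi$, so $|\myC A|<\pi$ ensures the expansion is valid (when $\Delta_t$ is small enough that the base point stays near the real axis — hence the $O(\Delta_t)$ remainder).

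Next I would apply $m\,b_1(x_0)\,\frac{d}{dt}$ term by term. Differentiating the $k$th term gives two pieces: $i2\pi f_0 k$ times the term itself (from the $e^{i2\pi f_0 kt}$ factor), plus $\Delta_t$ times a term involving $\mathcal L^{(k+1)}$ (from the slow base point $t\Delta_t - x_0$). The second piece is $O(\Delta_t)$ and gets absorbed into the error term. The first piece gives
\[
s_0(t) = m\,b_1(x_0)\sum_{k=1}^\infty i2\pi f_0 k \,\frac{(\myC A)^k}{k!}\,\mathcal L^{(k)}\big(\myC(t\Delta_t-x_0)\big)\,e^{i2\pi f_0 kt} + O(\Delta_t),
\]
and simplifying $k/k! = 1/(k-1)!$ yields exactly equation (\ref{eq: sExpansion}). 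The $k=0$ term drops out because $\mathcal L^{(0)}$ has no $e^{i2\pi f_0 t}$ factor and its time derivative is purely $O(\Delta_t)$.

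The main obstacle is making the $O(\Delta_t)$ bookkeeping rigorous: one must argue that the term-by-term differentiation is legitimate (uniform convergence of the differentiated series on the relevant time interval) and that the collected slow-base-point contributions genuinely admit a uniform $O(\Delta_t)$ bound. This requires controlling $\mathcal L^{(k)}$ and $\mathcal L^{(k+1)}$ at the shifted real arguments $\myC(t\Delta_t - x_0)$ uniformly in $k$ — using the pole structure of $\coth$ via Cauchy estimates, the derivative bounds decay like $(\pi-\varepsilon)^{-k}$, which combined with $(\myC A)^k/(k-1)!$ gives absolute and uniform convergence provided $|\myC A|<\pi$. A secondary subtlety is the sign/branch issue in $\widetilde{\mathcal L}$: for a physical point source one should check that $\xi(t)-x_0$ does not force the expansion across the origin in a way that breaks analyticity, but since $\mathcal L$ is entire-odd away from its imaginary poles this is not an actual obstruction, only something to state carefully. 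Everything else is routine term-by-term manipulation.
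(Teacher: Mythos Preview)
Your proposal is correct and shares the paper's core idea: Taylor-expand the Langevin response about the slow base point $\gamma(t\Delta_t - x_0)$ in powers of the fast increment $\gamma A e^{i2\pi f_0 t}$, with convergence guaranteed by the pole structure of $\coth$ at $i\pi\mathbb{Z}\setminus\{0\}$.

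The one tactical difference is the order of operations. You expand $\mathcal L$ first and then differentiate term by term, which obliges you to justify term-by-term differentiation --- precisely the obstacle you flag. The paper instead applies the chain rule \emph{before} expanding: it writes $s_0(t) = m\gamma\,\xi'(t)\,b_1(x_0)\,\mathcal L'\big(\gamma(\xi(t)-x_0)\big)$ (their Proposition~1) and then Taylor-expands $h(x)=\mathcal L'(\gamma x)$ about $t\Delta_t - x_0$ (their Proposition~2). Multiplying the resulting series through by $\xi'(t) = i2\pi f_0 A e^{i2\pi f_0 t} + \Delta_t$ shifts the harmonic index by one and isolates the $O(\Delta_t)$ remainder directly, with no term-by-term differentiation needed. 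Your route is equally valid but requires the uniform-convergence argument you sketch; the paper's ordering simply sidesteps it. One minor sharpening: the radius of convergence at a real base point $a$ is $\sqrt{(\gamma a)^2+\pi^2}/\gamma$, not exactly $\pi/\gamma$; your condition $|\gamma A|<\pi$ remains sufficient since $\pi$ is the infimum over real $a$, and your parenthetical about the base point ``staying near the real axis'' is unnecessary --- the base point $\gamma(t\Delta_t-x_0)$ is always real.
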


To extend this result further into the harmonic domain and to derive the portrait model, we apply harmonic filtering using the result from Theorem \ref{prop main}. The harmonic filtered signals, $s_k(t)$, are defined by window filtering for the band of frequencies in $k$-space around the chosen harmonic of $s_0$. There are many possible choices for a window filter around the frequency band (see Figure \ref{fig: spectrum} for example), e.g. a Hann window or a top hat function. To complete the derivation we do not need to explicitly define this function, but we simply denote it by $g(t)$.
\begin{figure}
	\centering
	\includegraphics[width=0.6\textwidth]{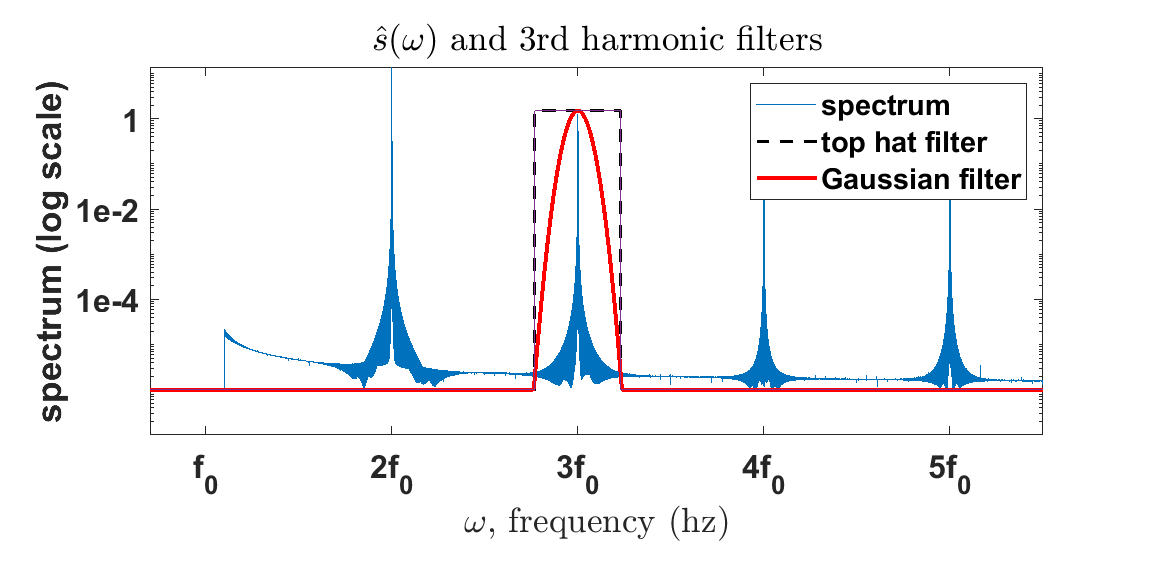}
	\caption{Example of a received signal's spectrum and possible window filters for the 3rd harmonic.}
	\label{fig: spectrum}
\end{figure}

\begin{definition}
	Given a received signal $s_0(t)$ and a window function $g(t)$, we define the $k$th harmonic filtered signal by
	\begin{equation}
		s_k(t) :=  \int_\R  \hat s_0 (\omega + k f_0) \hat g(\omega ) e^{i2\pi \omega t}   \, d \omega, 
	\end{equation}
	which may be reduced to
	\begin{equation}\label{eq: skfilt2}
		\begin{split}
			s_k(t) 
			& =  \int_\R  \hat s_0 (\omega + k f_0) \hat g(\omega ) e^{i2\pi \omega t}   \, d \omega \\
			& = \int_\R  \int_\R s_0 (\tau) e^{-i 2\pi (\omega + k f_0 ) \tau}\, d\tau \hat g(\omega) e^{i2\pi \omega t} \, d\omega \\
			& = \int_\R s_0 (\tau ) e^{-i2\pi k f_0 \tau} \left[
			\int_\R \hat g(\omega ) e^{i2\pi \omega (t - \tau) } \, d \omega
			\right] \, d\tau \\
			& =  \int_\R s_0 (\tau) e^{-i2\pi k f_0 \tau } g(t-\tau)  \, d \tau .
		\end{split}
	\end{equation}
\end{definition}

\begin{corollary}\label{mycor}
	Let $s_k(t)$ be defined as in (\ref{eq: skfilt2}), and assume the same conditions as in Theorem \ref{prop main}. Assume a homogeneous receive sensitivity ($b_1(x) = 1$) and $x_0 = 0$. Then the $k$th harmonic filtered signal for this point source is approximated by
	\begin{equation}
		s_{k,\delta}(t) \approx i 2 \pi m f_0 \frac{(\myC A)^k}{(k-1)!} 
		\int_\R \mathcal L^{(k)} (\myC  \tau \Delta_t) g(t- \tau ) \, d\tau.
	\end{equation}
	Therefore our harmonic PSF is given by gridding the harmonic time-domain signal back to FFP focus field location, obtaining
	\begin{equation}\label{eq: psf101}
		\begin{split}
			h_k(x) & = s_{k,\delta}(x\Delta_t^{-1} ) \\
			&   \approx i 2 \pi m f_0 \frac{(\myC A)^k}{\Delta_t (k-1)!} 
			\int_\R \mathcal L^{(k)} (\myC  \tau ) g((x - \tau) / \Delta_t) \, d\tau .
		\end{split}
	\end{equation}
\end{corollary}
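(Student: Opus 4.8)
The plan is to feed the harmonic decomposition of $s_0$ from Theorem~\ref{prop main} into the filtering integral~(\ref{eq: skfilt2}) and then exploit the fact that the window $g$ is, by design, a narrowband low-pass filter isolating a single harmonic band. Specializing Theorem~\ref{prop main} to $b_1\equiv 1$ and $x_0=0$ gives, for the dummy variable $\tau$,
\[
s_0(\tau) = i2\pi m f_0 \sum_{n=1}^\infty \frac{(\myC A)^n}{(n-1)!}\,\mathcal L^{(n)}(\myC \tau \Delta_t)\, e^{i2\pi f_0 n \tau} \;+\; O(\Delta_t),
\]
where the series converges absolutely: the hypothesis $|\myC A|<\pi$ matches the distance from the origin to the nearest pole of $\mathcal L$ (at $\pm i\pi$), so $|\mathcal L^{(n)}(x)|$ is bounded by a constant times $n!\,\pi^{-n}$, and the general term behaves like $n\,(\myC A/\pi)^n$. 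Substituting into the last line of~(\ref{eq: skfilt2}) and interchanging sum and integral, the filter's demodulation factor $e^{-i2\pi k f_0\tau}$ combines with $e^{i2\pi f_0 n\tau}$ to give
\[
s_{k,\delta}(t) \approx i2\pi m f_0 \sum_{n=1}^\infty \frac{(\myC A)^n}{(n-1)!}\int_\R \mathcal L^{(n)}(\myC \tau \Delta_t)\, e^{i2\pi f_0 (n-k)\tau}\, g(t-\tau)\, d\tau.
\]

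The key step is to discard every term with $n\neq k$. Each envelope $\mathcal L^{(n)}(\myC\tau\Delta_t)$ varies on the slow time scale $(\myC\Delta_t)^{-1}$, so its spectrum is concentrated in a band of width $O(\myC\Delta_t)\ll f_0$; hence the $n$th term of the sum has spectral content concentrated near $\omega=(n-k)f_0$. Since $g$ is constructed with passband narrower than $f_0$ (it was chosen precisely to select one harmonic; see Figure~\ref{fig: spectrum}), convolution with $g$ annihilates the terms with $n\neq k$ and passes the $n=k$ term essentially unchanged; equivalently, in the frequency domain $\hat g(\omega)$ multiplying $\hat s_0(\omega+kf_0)$ is negligible away from $\omega=0$. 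Keeping only $n=k$ yields the first claimed identity,
\[
s_{k,\delta}(t) \approx i2\pi m f_0\, \frac{(\myC A)^k}{(k-1)!}\int_\R \mathcal L^{(k)}(\myC \tau \Delta_t)\, g(t-\tau)\, d\tau.
\]

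To obtain the PSF formula~(\ref{eq: psf101}), I would grid the harmonic signal back to FFP focus-field position by evaluating at $t=x\Delta_t^{-1}$, then substitute $u=\tau\Delta_t$, so that $d\tau = du/\Delta_t$ and $g(t-\tau)=g\bigl((x-u)/\Delta_t\bigr)$; relabeling $u$ as $\tau$ pulls a factor $\Delta_t^{-1}$ out front and reproduces exactly the stated expression for $h_k$.

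The main obstacle is making the suppression of the cross-harmonic terms quantitative. It relies on a scale separation among the harmonic bandwidth (governed by $\myC A$ and $\Delta_t$), the drive frequency $f_0$, and the passband of $g$, together with control of the $O(\Delta_t)$ remainder of Theorem~\ref{prop main} after convolution with $g$. Since the corollary is stated as an approximation, it suffices to track the leading-order term and to invoke the narrow-band property of $g$ already assumed throughout Section~\ref{sec: main}; a fully rigorous error bound would instead require explicit hypotheses on $\hat g$ (e.g.\ support in $(-f_0/2,\,f_0/2)$) and on the spectral decay of the individual harmonic envelopes.
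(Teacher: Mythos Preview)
Your proposal is correct and follows essentially the same route as the paper: substitute the harmonic expansion of Theorem~\ref{prop main} into the filtering integral~(\ref{eq: skfilt2}), drop the $O(\Delta_t)$ remainder, and discard the $n\neq k$ cross-terms on the grounds that the slowly varying envelopes place those terms outside the passband of $g$ (the paper phrases this as ``rapid decay properties of the Fourier transform for smooth functions''). Your treatment is in fact a bit more explicit than the paper's---you justify absolute convergence of the series and spell out the spectral-separation argument and the change of variables $u=\tau\Delta_t$ for the gridding step---but the underlying logic is identical.
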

\begin{proof}
	Combining (\ref{eq: skfilt2}) and (\ref{eq: sExpansion}) with a homogeneous receive coil obtains
	\begin{equation}\label{eq: app502}
		\begin{split}
			s_{k,\delta} (t) 
			& \approx  i2\pi m f_0 \sum_{j =1}^\infty \frac{(\myC A)^j}{(j -1)!} \int_\R \mathcal L^{(j)} (\myC \tau \Delta_t ) e^{i2\pi f_0 \tau (j -k)} g(t-\tau)  \, d\tau  \\
			& \approx i 2 \pi m f_0 \frac{(\myC A)^k}{(k-1)!} 
			\int_\R \mathcal L^{(k)} (\myC  \tau \Delta_t) g(t- \tau ) \, d\tau .
		\end{split}
	\end{equation}
	In these approximations, we have dropped the small $O(\Delta_t)$ term and used the fact that the integrals where $j\neq k$ are negligible due to the rapid decay properties of the Fourier transform for smooth functions\cite{iosevich2014decay}. 
	
	With the FFP focus point given by $t\Delta_t$, then the gridded harmonic portrait for this point source, which gives us the harmonic PSF, is given by
	\begin{equation}
		h_k(x) = s_{k,\delta}(x\Delta_t^{-1} ),
	\end{equation}
	which leads to (\ref{eq: psf101}). 
\end{proof}
We note that Corollary \ref{mycor} can be derived with equalities involving small additional terms related to the decay rates of the Fourier transform. However, the exact derivation significantly complicates the details without adding substantial value.

\subsection{Derivation of the Convolutional Portrait Model}
If $\rho(x)$ is a more general combination of point sources given as
\begin{equation}
	\rho(x) = \sum_n \rho_n \delta(x-x_n),
\end{equation}
then (\ref{eq: sExpansion}) generalizes to
\begin{equation}
	s_0(t) = 
	i2\pi m f_0 \sum_n b_1(x_n ) \rho_n \sum_{k=1}^\infty
	\frac{(\myC A)^{k}}{(k-1)!} \mathcal L^{(k)}\left[\myC (t \Delta_t - x_n)\right]{e^{i2\pi f_0 k t}} + O(\Delta_t ) .
\end{equation}
Using this expression and repeating similar steps to what was done in (\ref{eq: app502}) and substituting $\tau = \Delta_t^{-1} x_n + w$ obtains
\begin{equation}
	\begin{split}
		s_k(t) & \approx i2\pi m f_0 \frac{(\myC A)^k}{(k-1)!} \sum_n b_1(x_n ) \rho_n
		\int_\R \mathcal L^{(k)} (\myC  (\tau \Delta_t - x_n) )  g(t -\tau) \, d\tau \\
		& = i2\pi m f_0 \frac{(\myC A)^k}{(k-1)!} \sum_n b_1(x_n ) \rho_n
		\int_\R \mathcal L^{(k)} (\myC w \Delta_t ) g(t -\Delta_t^{-1} x_n - w) \, dw \\
		& = \sum_n b_1(x_n ) \rho_n 
		\underbrace{ i2\pi m f_0 \frac{(\myC A)^k}{(k-1)!} 
			\int_\R \mathcal L^{(k)} (\myC  w \Delta_t ) g(t -\Delta_t^{-1} x_n - w) \, dw }_{s_{k, \delta} (t - \Delta_t^{-1} x_n )} 
		\\
		& = \sum_n b_1(x_n )\rho_n s_{k, \delta} (t - \Delta_t^{-1} x_n).
	\end{split}
\end{equation}
Thus we have arrived at a relationship between the general harmonic filtered signal and the single point source harmonic filter signal resembling something similar to a convolution. To complete the portrait convolutional relationship, similar to above we again map $s_k$ to the portrait domain by gridding time points onto their FFP focus field locations, which obtains
\begin{equation}
	\begin{split}
		d_k(x) & = s_k(x \Delta_t^{-1} ) \\
		& = \sum_n b_1(x_n ) \rho_n s_{k,\delta}( (x - x_n) \Delta_t^{-1}) \\
		& = \sum_n b_1(x_n ) \rho_n h_k(x-x_n) \\
		& = \rho_b * h_k(x) ,
	\end{split}
\end{equation}
where $\rho_b = \rho \cdot b_1$.

\section{Connection between the Generalized Model and MH3D}
Both MH3D and the generalized MPI reconstruction model introduced in \cite{sanders2025physics} rely on the following form of the received signal
\begin{equation}\label{eq: cc1}
	s_0(t) = m \iiint \rho(\vec x) \vec b_1\T(\vec x) \vec{\vec{h}}(\vec \xi (t) - \vec x) \vec \xi'(t)  \, \d \vec x,
\end{equation}
from which the discretized linear matrix model is given by
\begin{equation}
	\vec s_0 = V E H B \rho.
\end{equation}
This discretized model encapsulates each of the components from the continuous integral formulation in (\ref{eq: cc1}). Namely, $V$ accounts for the velocity component, $E$ accounts for the FFP location, $H$ accounts for $\vec{\vec h}$ as the magnetic convolutional component, and $B$ accounts for the receive sensitivity.

The MH3D model applies a harmonic filter and then a gridding operation to the received signal $s_0$. We write these two operations as $C_k$ and $\mathcal I_{t,x}$ respectively, which is expressed as

\begin{equation}
	\begin{split}
		d_k & = \mathcal I_{t,x} C_k s_0\\
		& = \mathcal I_{t,x} C_k V E H B \rho
	\end{split}
\end{equation}

On the other hand, according to (\ref{eq: matrixModel}) we have
\begin{equation}
	d_k = \mathbf{H_k} B\rho,
\end{equation}
where $\mathbf{H_k} = [H_{kx} , H_{ky} , H_{kz}]$, and $B = [B_x, B_y, B_z]\T$ is the same matrix as in (\ref{eq: cc1}). Based on this line of reasoning, we deduce that
\begin{equation}
	\mathcal I_{t,x} C_k V E H = \mathbf{H_k} .
\end{equation}
While this is not a formal proof of equivalence between the models, this paper demonstrates a striking resemblance between the two as shown in the images output in Figures \ref{fig: model} and \ref{fig: mh2d}.


\section{The Information at Each Harmonic}
Here, we provide intuition and details of the information contained in each harmonic portrait. Observe from equations (\ref{eq: hkapprox0}) and (\ref{eq: hkapprox}) that we are approximately sampling the SPION density convolved with derivatives of the Langevin function. If we were able to measure the first harmonic with infinite SNR, leaving us with $\rho$ convolved with the first derivative of the Langevin function, then this would have sufficient information to restore $\rho \ge 0$, since $\mathcal L'$ is a positive function which decays rapidly akin to a Gaussian PSF. In this case, we would only need to deconvolve this result. However, since we only observe $\rho$ convolved with 2nd and higher derivatives of the Langevin, certain objects in $\rho$ would fall into a null space, and thus be un-recoverable. For example, since 
\begin{equation}
	0 	= \int \mathcal L''(x) \, dx,
\end{equation}
then it is easy to see that any constant background in $\rho$ would be completely eliminated in the higher harmonics. Likewise, any polynomial of degree $k$ in $\rho$ is eliminated by convolving $\rho$ with $\mathcal L^{(k+2)}$. To that end, we can intuit that the lower harmonics provide the most important information to restore the fundamental components of $\rho$, while the higher harmonics contain more functions in the null space. On the other hand, the higher harmonic PSFs are sharper and improve the resolution in the realistic case where the SNR is limited (see Section \ref{sec: sampling} for details).

To proceed with this argument more formally, we first set $\myC = 1$ in the argument of the Langevin function to simplify the discussion, without any loss of generality. Let's consider $\rho$ to be a nonnegative and continuous infinitely differentiable function on some bounded interval $[a,b]$ and denote this space of functions by \begin{equation}
	\mathcal S := \{ \rho(x) \, | \, \rho(x)\ge 0 \quad \text{and} \quad \rho \in C^{\infty}[a,b] \} .
\end{equation}
Next, let's define the set of functional operators acting on $\mathcal S$ by
\begin{equation}
	\mathbb F_k( \rho   ) := \mathcal L^{(k)} * \rho(x) ,
\end{equation}
for $k\ge 1$, while noting that the range of $\mathbb F_k$ is effectively the set of functions that can be seen in the $k$th harmonic portrait domain. Then it is easy to deduce that $\text{null}(\mathbb F_1) = \varnothing$, while for $k\ge 2$, the null space of $\mathbb F_k$ contains polynomials of degree strictly less than $k-1$. So as $k$ increases, the respective null space also increases. Hence with infinite SNR, then any functions that we are able to recover from the range of $\mathbb F_k$ we are also able to recover from $\mathbb F_{k-1}$.

In summary, the lower harmonics provide the fundamental information needed to recover the image structure. The first harmonic is the only harmonic that retains the background constant of the image, so within MPI we must make assumptions about the image background, which has typically assumed that the regions near the boundaries are zero (see Section \ref{sec: alpha} for details). The second harmonic retains all other image information, and more fundamental image information is lost as the harmonic number increases. However, the higher harmonics provide higher SNR information about the function's derivatives, which improves the resolution of the images when dealing with pragmatic MPI sampling patterns, which is discussed in Section \ref{sec: sampling}.

\section{MHAD: A Native Image Reconstruction based on Portrait Data}\label{sec: MHAD}
This section describes a simple native image reconstruction based on an anti-differentiation method from harmonic portraits, which we call multi-harmonic anti-differentiation (MHAD). The images returned from this method are approximately equivalent to those obtained from the X-space stitching method, while in practice particular nuances will be observed from each approach. However, for MHAD to work in practice, the portraits need to be sampled sufficiently dense for the discretized approximations used below to be accurate. 

\begin{figure}
	\centering
	\includegraphics[width=.75\textwidth]{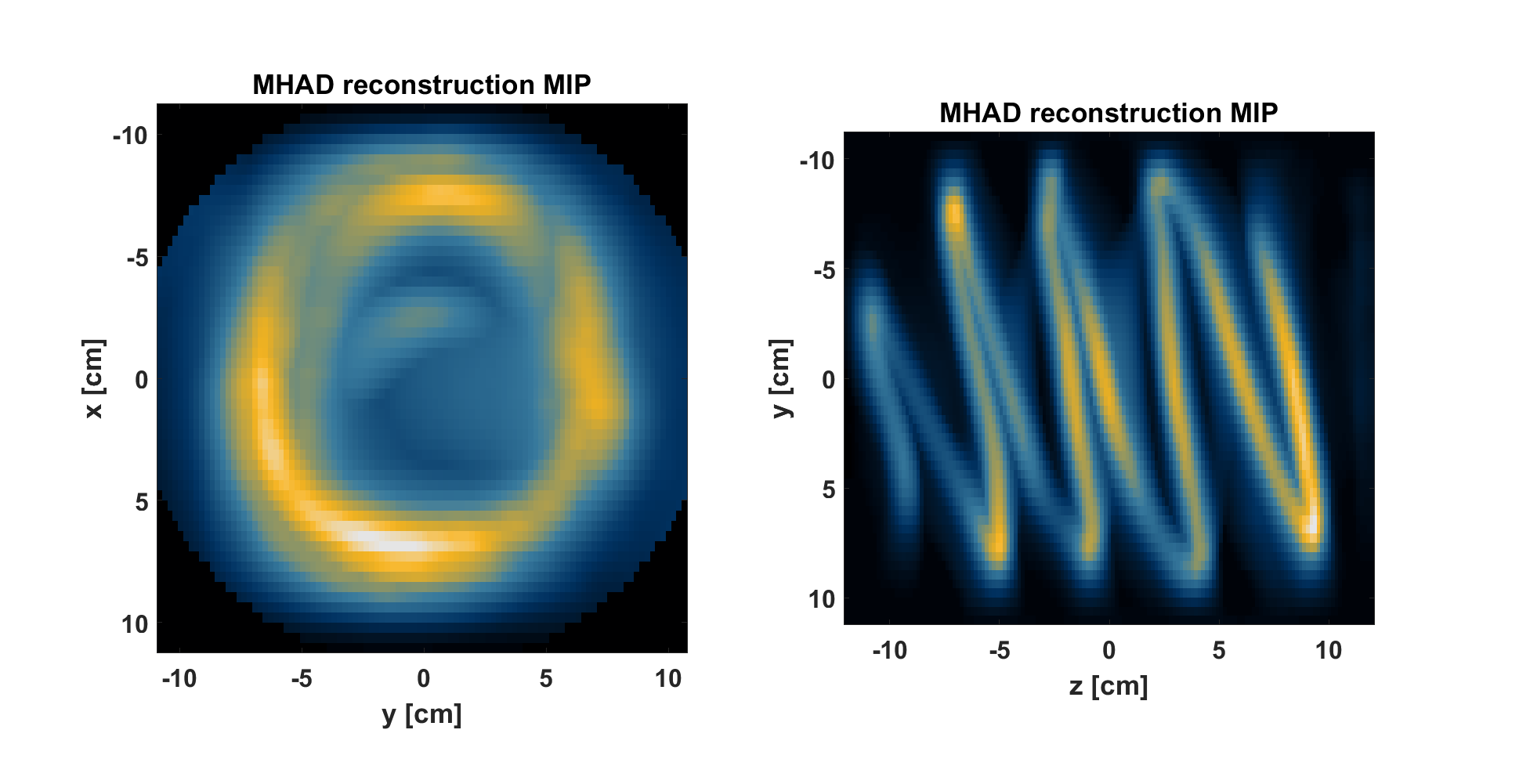}
	\caption{An example of a MHAD reconstruction using the same experimental dataset from the example shown in Figure \ref{fig: 3Dspiral}.}
	\label{fig: mhad}
\end{figure}

We continue our discussion in a 1D setting, but the same arguments apply in 3D, where we only need to perform the derived methods along the transmit axis. Equation (\ref{eq: 2AD}) shows how we may evaluate an anti-derivative with the 2nd harmonic portrait to estimate the native image, where the native image is defined by 
$$
\rho_N(x) = h * \rho(x) = \mathcal L' * \rho(x) 
$$
as described in \cite{goodwill2010x}. To make this useful in a practical setting, we need a discrete method for anti-differentiation. Let us write a discretized model for the derivative at the 2nd harmonic by
\begin{equation}
	d_2 [j] = \rho_N[j+1] - \rho_N[j-1] + \epsilon[j],
\end{equation}
where $j$ is the pixel index and $\epsilon$ is a noise term. For computational purposes, we will make this derivative circulant so that we can write it as a convolution given by
\begin{equation}
	d_2 = f * \rho_N + \epsilon,
\end{equation}
where $f \T = \begin{bmatrix}
	0,	-1 , 0 , \dots , 0,1
\end{bmatrix}.
$
Then we may write a 2nd harmonic only regularized MHAD model as
\begin{equation}\label{eq: mhad2}
	\begin{split}
		\rho_N^* & =  \arg\min_\rho  \|  f*\rho - d_2 \|_2^2 + \lambda \| \rho \|_2^2\\
		& = \mathcal F^{-1} \left[\frac{\overline{\hat f} }{ |\hat f|^2 + \lambda } \cdot \hat d_2 \right] ,
	\end{split}
\end{equation}
where $\mathcal F$ is the discrete Fourier transform and the hats denote the Fourier transforms of the vectors (see \cite{sanders2020effective} for details and examples of more interesting regularizers).

To arrive at the full multi-harmonic version, we write 
$$
f^k = \underbrace{f*f*\dots *f}_{\text{$k$ times}},
$$
and introduce relevant scaling factors for each portrait given by $c_k = \frac{(\myC A)^k}{(k-1)!}$ (see Appendix \ref{sec: analysis} for details). Then our more general portrait model at the $k$th harmonic is given as
$$
d_k = c_k \cdot f^{k-1} * \rho_N  + \epsilon_k.
$$
Then the complete MHAD solution is given by
\begin{equation}\label{eq: mhadM}
	\begin{split}
		\rho_N^* & =  \arg\min_\rho \sum_{k=2}^K \|  f^{k-1}*\rho - c_k^{-1} d_k \|_2^2 + \lambda \| \rho \|_2^2\\
		& = \mathcal F^{-1} \left[\frac{\sum_k c_k^{-1} \overline{\hat f^{k-1}} \cdot d_k  }{ \sum_k |\hat f^{k-1} |^2 + \lambda }  \right] .
	\end{split},
\end{equation}
Note that convenience of the analytical MHAD solutions written in (\ref{eq: mhad2}) and (\ref{eq: mhadM}) is that the computational method for solving them only requires a few FFTs and are therefore extremely cheap computational solutions.


\section{Proof of Theorem \ref{prop main}} \label{sec: proof}
To prove Theorem \ref{prop main} we first establish a few basic propositions that will be needed. The first is a simple and well-known result\cite{goodwill2010x}.

\begin{prop}
	Consider a 1D MPI signal model with an FFP at time $t$ given by $\xi(t)$ and a total magnetic field given by $H(x,t) = G(\xi(t) - x)$. Then the unfiltered received signal for an arbitrary SPION density $\rho(x)$ is given by
	\begin{equation}\label{eq: prop1}
		s_0(t) =m \myC \xi'(t) \cdot \rho_b * h(\xi(t)),
	\end{equation}
	where $h(x) = \mathcal L'(\myC x)$ and $\rho_b(x) = \rho(x) \cdot b_1(x)$. 
\end{prop}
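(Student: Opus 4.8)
The plan is to start from the general MPI signal equation given in the excerpt, namely
\[
s_0(t) = m\frac{d}{dt}\int \rho(x)\, b_1(x)\, \frac{H(x,t)}{\|H(x,t)\|}\,\mathcal L\big(\beta\|H(x,t)\|\big)\, dx,
\]
specialized to the 1D linear-gradient case $H(x,t) = G(\xi(t)-x)$. First I would substitute this field and use $\beta G = \gamma$ to rewrite the integrand: since in 1D $\frac{H}{\|H\|}\mathcal L(\beta\|H\|)$ becomes $\operatorname{sgn}(\xi(t)-x)\,\mathcal L\big(\gamma|\xi(t)-x|\big) = \mathcal L\big(\gamma(\xi(t)-x)\big)$, using that $\mathcal L$ is an odd function. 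So the integrand collapses cleanly to $\rho(x) b_1(x)\,\mathcal L\big(\gamma(\xi(t)-x)\big)$, i.e. $\big(\rho_b * \mathcal L(\gamma\,\cdot\,)\big)(\xi(t))$ where $\rho_b = \rho\cdot b_1$ and $*$ is convolution in the spatial variable.

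Next I would carry out the time derivative. The only $t$-dependence sits inside $\xi(t)$, so by the chain rule
\[
\frac{d}{dt}\Big[\big(\rho_b * \mathcal L(\gamma\,\cdot\,)\big)(\xi(t))\Big]
= \xi'(t)\cdot \frac{d}{du}\Big[\big(\rho_b * \mathcal L(\gamma\,\cdot\,)\big)(u)\Big]\Big|_{u=\xi(t)}.
\]
Differentiating the convolution passes the derivative onto the kernel, and $\frac{d}{du}\mathcal L(\gamma u) = \gamma\,\mathcal L'(\gamma u)$, which pulls out the factor $\gamma$. Writing $h(x) := \mathcal L'(\gamma x)$ this yields exactly
\[
s_0(t) = m\,\gamma\,\xi'(t)\cdot \big(\rho_b * h\big)(\xi(t)),
\]
which is the claimed identity \eqref{eq: prop1}.

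The calculation is essentially routine, so there is no serious obstacle; the only points needing a word of care are (i) justifying that $\frac{H}{\|H\|}\mathcal L(\beta\|H\|)$ really equals $\mathcal L(\gamma(\xi(t)-x))$ across the sign change at $x=\xi(t)$ — this is exactly the oddness of $\mathcal L$ together with $\mathcal L(0)=0$, so the composite is smooth and the absolute values disappear — and (ii) justifying the interchange of $d/dt$ and the spatial integral, which is fine under mild decay/smoothness assumptions on $\rho_b$ (compact support suffices, as is implicitly assumed) since $\mathcal L$ and its derivatives are bounded. I would state these two remarks briefly and then present the two-line computation above.
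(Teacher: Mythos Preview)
Your proposal is correct and follows essentially the same route as the paper's proof: substitute the 1D field into the signal integral, pass the time derivative inside, apply the chain rule through $\xi(t)$, and recognize the resulting integral as $\rho_b * h$ evaluated at $\xi(t)$. Your treatment is in fact slightly more careful than the paper's, which silently drops the $\frac{H}{\|H\|}$ factor and the absolute value without explicitly invoking the oddness of $\mathcal L$; your remark (i) makes that step rigorous, and remark (ii) on interchanging $d/dt$ with the integral is a welcome addition the paper omits.
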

\begin{proof}
	Recall the unfiltered received signal in 1D MPI is given by
	\begin{equation}
		\begin{split}
			s_0(t) 
			&= m\frac{d}{dt} \int b_1(x) \rho(x) \mathcal L\left[ \myC G^{-1} H(x,t) \right] \, dx\\
			& = m\int \rho_b(x) \frac{d}{dt} \mathcal L \left[ \myC (\xi(t) - x) \right] \, dx\\
			& = m\myC  \xi'(t) \int \rho_b(x) \mathcal L'\left[ \myC (\xi(t) - x) \right] \, dx\\
			& =m \myC  \xi'(t) \cdot  \rho_b * h(\xi(t)).
		\end{split}
	\end{equation}
\end{proof}
The following is a well-known theorem that can be found in any standard text in complex analysis. 
\begin{theorem}\label{thm comp}
	Suppose $f(z)$ is an analytic function near $z=a$  in the complex plane. Then $f(z)$ has a power series expansion of the form
	$$
	f(z) = \sum_{k=0}^\infty f^{(k)}(a)\frac{(z-a)^k}{k!},
	$$
	and the radius of convergence of this series is the largest value of $R$ such that $f(z)$ is analytic on the set $\{|z-a|<R\}$.
\end{theorem}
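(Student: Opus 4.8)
The plan is to prove this via the Cauchy integral formula. Suppose $f$ is analytic on $\{|z-a| < R\}$ and fix a point $z$ with $|z - a| < R$; choose an intermediate radius $r$ with $|z-a| < r < R$. Cauchy's formula over the positively oriented circle $|w-a| = r$ gives
\[
f(z) = \frac{1}{2\pi i}\oint_{|w-a|=r} \frac{f(w)}{w - z}\, dw .
\]
The key algebraic step is to expand the Cauchy kernel as a geometric series in the quantity $(z-a)/(w-a)$:
\[
\frac{1}{w - z} = \frac{1}{(w-a)\bigl(1 - \tfrac{z-a}{w-a}\bigr)} = \sum_{k=0}^{\infty} \frac{(z-a)^k}{(w-a)^{k+1}},
\]
which converges, and in fact converges uniformly for $w$ on that circle, precisely because $|z-a|/r < 1$.

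Next I would substitute this series into the integral and interchange the sum with the contour integral. The interchange is justified by the uniform convergence of the series on the compact circle together with the boundedness of $f$ there, which lets one control the tail of the series uniformly and pass to the limit under the integral sign. This yields
\[
f(z) = \sum_{k=0}^{\infty} \left( \frac{1}{2\pi i}\oint_{|w-a|=r} \frac{f(w)}{(w-a)^{k+1}}\, dw \right)(z-a)^k .
\]
The coefficient of $(z-a)^k$ is then identified with $f^{(k)}(a)/k!$ by invoking the generalized Cauchy integral formula for derivatives; this also shows the coefficient does not depend on the chosen $r$. Since the argument applies for every $r < R$, the representation holds on every disk $\{|z-a| < r\}$ with $r < R$, hence on all of $\{|z-a| < R\}$; in particular the radius of convergence of the series is at least $R$.

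For the final assertion, it remains to rule out that the radius of convergence exceeds $R$. If it were some $\rho > R$, the power series would define a function $g$ analytic on $\{|z-a| < \rho\}$ agreeing with $f$ near $a$, and hence — by the identity principle — on all of $\{|z-a| < R\}$; thus $g$ would analytically extend $f$ to the strictly larger disk $\{|z-a| < \rho\}$, contradicting the hypothesis that $R$ is the largest radius of analyticity (with the entire-function case $R=\infty$ handled by the same estimate for every finite $r$). Therefore the radius of convergence is exactly $R$. The only substantive inputs are the Cauchy integral formula and its differentiated form; granting those standard facts, the one mildly delicate point — and the place I would be most careful — is the justification of the term-by-term integration via uniform convergence.
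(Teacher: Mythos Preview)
Your proof is the standard, correct argument via the Cauchy integral formula and geometric expansion of the kernel, with the sum--integral interchange properly justified by uniform convergence. The paper does not actually prove this theorem: it simply states it as a well-known result ``that can be found in any standard text in complex analysis'' and uses it as a black box to control the convergence of the Taylor expansion of the Langevin function. So there is nothing to compare against --- your argument supplies exactly the textbook proof the paper is invoking.
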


\begin{prop}\label{prop: taylor}
	Let $h(x) = \mathcal L'(\myC x)$ , where $\myC >0$ and $\mathcal L$ is the Langevin function given by $\mathcal L(x) = \coth (x) - 1/x$. Then a particular Taylor expansion of $h$ is given as
	\begin{equation}\label{eq: s01}
		h(a+b) = \sum_{k=0}^\infty h^{(k)}(a) \frac{b^k}{k!},
	\end{equation}
	which converges for 
	$$
	|\myC b| < \sqrt{\myC^2 a^2 + \pi^2}. 
	$$
\end{prop}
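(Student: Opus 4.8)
The plan is to pass to the complex plane and reduce the whole statement to Theorem~\ref{thm comp}: the radius of convergence of the Taylor series of $h$ about $a$ is the largest $R$ for which $h$ extends analytically to the open disk $\{\,|z-a|<R\,\}$. So everything comes down to locating the singularities of the analytic continuation of $z \mapsto h(z) = \mathcal L'(\myC z)$, where $\mathcal L(z) = \coth z - 1/z$.

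First I would recall that $\coth z = \cosh z/\sinh z$ is meromorphic on $\C$ with simple poles exactly at the zeros of $\sinh$, i.e.\ at $z = i\pi n$ for $n \in \Z$. At $z=0$ the pole of $\coth$ is simple with residue $1$ (from the Laurent expansion $\coth z = \tfrac1z + \tfrac z3 - \tfrac{z^3}{45} + \cdots$), which is precisely cancelled by the $-1/z$ term; hence $\mathcal L(z) = \coth z - 1/z$ extends analytically across the origin, with $\mathcal L(0)=0$ and $\mathcal L'(0) = \tfrac13$. Thus $\mathcal L$ is analytic on $\C \setminus \{\, i\pi n : n \in \Z,\ n\neq 0 \,\}$, and since differentiation cannot remove a pole, $\mathcal L'$ has exactly the same singular set. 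Consequently $h(z) = \mathcal L'(\myC z)$ is analytic precisely off the discrete set $\{\, i\pi n/\myC : n \in \Z,\ n \neq 0 \,\}$; in particular it is analytic on all of $\R$, so $h^{(k)}(a)$ is well defined for every real $a$.

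Next, for real $a$ I would compute the distance from $a$ to this set: $|a - i\pi n/\myC|^2 = a^2 + \pi^2 n^2/\myC^2$, minimized over $n \neq 0$ at $n = \pm 1$, giving nearest-singularity distance $R = \sqrt{a^2 + \pi^2/\myC^2}$. By Theorem~\ref{thm comp}, the expansion $(\ref{eq: s01})$ therefore converges for $|b| < R$, i.e.\ for $\myC^2 b^2 < \myC^2 a^2 + \pi^2$, which is exactly the stated condition $|\myC b| < \sqrt{\myC^2 a^2 + \pi^2}$. (Here $a$ is taken real, as in the intended application in Theorem~\ref{prop main}; the identical argument with $a$ complex yields the corresponding condition $\min_{n\neq 0}|a - i\pi n/\myC|$.)

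The only genuinely delicate point is the removable-singularity claim at the origin --- that the principal parts of $\coth z$ and $1/z$ at $z=0$ agree --- together with the observation that none of the remaining poles at $z = i\pi n$, $n\neq 0$, is destroyed by taking a derivative. Both facts are immediate from the standard Laurent expansion of $\coth$, but are worth stating explicitly since they are what pins the value $\pi$ in the convergence radius. Everything else is a routine distance computation plus the quoted fact from complex analysis.
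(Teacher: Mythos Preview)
Your proposal is correct and follows essentially the same approach as the paper: invoke Theorem~\ref{thm comp}, locate the singularities of the analytic continuation of $\mathcal L$ (hence of $\mathcal L'$ and $h$) at $i\pi n/\myC$ for nonzero integers $n$, and compute the distance from a real center $a$ to the nearest such point. Your write-up is in fact more careful than the paper's sketch, since you explicitly justify why the apparent singularity at the origin is removable and why differentiation does not destroy the remaining poles.
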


Proof of proposition \ref{prop: taylor} does not require a detailed calculation, but rather a careful analysis of the Langevin function and the corresponding convergence of its Taylor series according to Theorem \ref{thm comp}. In particular, the Langevin function (and its derivatives) have discontinuities in the complex plane at $ik\pi$ with $k$ being a nonzero integer. Then a typical Taylor series of the Langevin function written by 
$$
\mathcal L(x) = \sum_{k=0}^\infty \mathcal L^{(k)} (x) \frac{(x-a)^k}{k!},
$$
converges for values $|x| < \sqrt{a^2 + \pi^2}$. Translating these concepts to our particular function $h(x)$ completes the proposition.

Finally we have all of the ingredients necessary to prove the main result.
\begin{proof}[Proof of Theorem \ref{prop main}]
	Substituting the values for $\xi(t)$ and $\rho(x)$ into (\ref{eq: prop1}) obtains
	\begin{equation}
		\frac{s_0(t)}{m\xi'(t) b_1(x_0)} = \myC  \cdot h(A e^{i2\pi f_0 t} + t \Delta_t - x_0) .
	\end{equation}
	Applying the expansion for $h$ given in (\ref{eq: s01}) to the above equation yields
	\begin{equation}
		\begin{split}
			\frac{s_0(t)}{m\xi'(t) b_1(x_0 )} & =\myC  \cdot h(A e^{i2\pi f_0 t} + t \Delta_t - x_0)\\
			& = \myC \sum_{k=0}^\infty
			h^{(k)}(t\Delta_t - x_0) \frac{A^k e^{i2\pi f_0 kt }}{k!}  \\
			& =  \myC \sum_{k=0}^\infty \frac{A^k}{k!}
			h^{(k)}(t\Delta_t - x_0) {e^{i2\pi f_0 kt }}
		\end{split}
	\end{equation}
	Multiplying through by $m\xi'(t) b_1(x_0) $ and performing rearrangements completes the proof. 
\end{proof}

\section{Multi-dimensional Derivation of the Portrait Model with a Single-Axis Transmit}
Here we outline the details for the multi-dimensional derivation of the portrait model, which was more formally derived for the 1D case. Begin by recalling the unfiltered 3D MPI received signal is given by
\begin{equation}\label{eq: 1}
	\begin{split}
		s_0(t) & = m \iiint \rho(\vec x ) \vec b_1\T(\vec x) \frac{G(\vec \xi(t) - \vec x)}{\| G(\vec \xi(t) - \vec x) \|} \mathcal L [\beta \| G(\vec \xi(t)  - \vec x) \|] \, d\vec x
		\\
		&  = m \vec v(t)\T \vec{\vec{h}} * \vec \rho_b (\xi(t)) \\
		& = m \sum_{i,j=1}^3 v_j(t) h_{ji}*(\rho b_i) (\xi(t)) , 
	\end{split}
\end{equation}
where 
\begin{itemize}
	\item $\rho(\vec x)$ is the SPION(s) we are imaging.
	\item $\vec b_1(\vec x)$ is the receive coil sensitivity. 
	\item $\vec \xi (t)$ is the FFP.
	\item $\beta$ is a conglomerate nanoparticle constant.
	\item $m$ is the magnetic moment.
	\item $G$ is the $3\times 3$ static magnetic gradient field matrix.
	\item $\vec{\vec{h}}(\vec x)$ is the $3\times 3$ PSF tensor function given by
	
	\begin{equation}\label{eq: PSF}
		\begin{split}
			\vec{\vec{h}} (\x) := 
			\Bigg[ &\mL'(\| G \x \|/H_{sat}) \frac{G \x \x\T G\T }{\| G \x \| H_{sat}}
			+   \mL (\| G \x \|/H_{sat}) \left( I - \frac{G \x \x\T G\T }{\| G \x \|^2}
			\right) \Bigg] \frac{G}{\| G \x \|}.
		\end{split}
	\end{equation}
	\item $\vec v (t)$ is the FFP velocity. 
\end{itemize}

We proceed with assuming a scan sequence and SPION density given as
\begin{equation}
	\rho(\vec x) = \delta(\vec x - \vec x_0)
\end{equation}
and 
\begin{equation}
	\vec \xi(t) = t\cdot \vec \Delta + A e^{i2\pi f_0 t} \cdot \vec e_3 ,
\end{equation}
where $\vec \Delta$ is some linear shift rate vector and $\vec e_3 = (0, 0, 1)\T$. 

Inputting this into the last line of (\ref{eq: 1}) and then using a Taylor expansion along the $z$-axis obtains
\begin{equation}
	\begin{split}
		s_0(t) & \approx 
		im2\pi f_0  \sum_{j=1}^3 A e^{i2\pi f_0 t} h_{3j} (t \cdot \vec \Delta  + A e^{i2\pi f_0 t} \cdot  \vec e_3 - \vec x_0) b_j(\vec x_0)\\
		& = C \sum_{j=1}^3 b_j(\vec x_0) 
		\sum_{k=0}^\infty
		\frac{ h_{3j}^{(0,0,k)} ( t\cdot \vec \Delta - \vec x_0 )  }{k! } A^{k+1} e^{i2\pi f_0 (k+1) t} ,
	\end{split},
\end{equation}
where $C = im2\pi f_0$, and in the approximation we dropped the $O(\|\vec \Delta\|)$ terms coming from the FFP velocity shift vector. In what follows we write $h_{3j}^{(k)}$ to denote $h_{3j}^{(0,0,k)}$ for notational convenience.

Filtering for the $k$th harmonic then leads to
\begin{equation}\label{eq: skMD}
	\begin{split}
		s_k(t) & = \int s_0(\tau ) e^{-i2\pi f_0 k \tau } g(t - \tau) \, d \tau \\
		& \approx C \sum_{j=1}^3 b_j(\vec x_0) \frac{A^k}{(k-1)!} \int  h_{3j}^{(k-1)} (\tau \vec \Delta  - \vec x_0 ) g(t - \tau) \, d\tau .
	\end{split}
\end{equation}
We may grid this signal back into the spatial domain to the FFP focus field point to form the portrait via $t = (\vec \Delta)^{-1} \vec x $, where here it is implied that $(\vec \Delta)^{-1}\vec x$ will select any one of the shift direction which is nonzero for the inverse mapping. For example, if $\Delta_y$ being the shift rate in $y$ is nonzero, then we may define $(\vec \Delta)^{-1}\vec x := \Delta_y^{-1} y$.

Making this substitution obtains
\begin{equation}
	\begin{split}
		s_k(\vec \Delta^{-1} \vec x )
		&
		\approx 
		C \sum_{j=1}^3 b_j( x_0 ) \frac{A^k}{(k-1)!} \int  h_{3j}^{(k-1)} (\tau \vec \Delta - \vec x_0 ) g(\vec \Delta^{-1} \vec x - \tau) \, d\tau 
		\\
		& = 
		C \sum_{j=1}^3 b_j( x_0 )\underbrace{ \frac{A^k}{(k-1)!} \int  h_{3j}^{(k-1)} ((\vec x-\vec x_0) - \Delta w ) g(w ) \, dw }_{=:  (\vec h_{k})_j ( \vec x - \vec x_0 )}
	\end{split}
\end{equation} 

For a generalized SPION source written as 
$$
\rho(\vec x) = \sum_{n}\rho_n \delta(\vec x - \vec x_n) ,
$$
then (\ref{eq: skMD}) becomes
\begin{equation}
	\begin{split}
		s_k(t) & \approx C \sum_{j=1}^3 \sum_n \rho_n b_j(\vec x_n) \frac{A^k}{(k-1)!} \int  h_{3j}^{(k-1)} (\tau \vec \Delta  - \vec x_n ) g(t - \tau) \, d\tau ,
	\end{split}
\end{equation}
and the portrait is given by
\begin{equation}
	\begin{split}
		d_k(\vec x) & = s_k(\vec \Delta^{-1} \vec x ) \\
		& \approx C \sum_{j=1}^3 \sum_n \rho_n b_j(\vec x_n) 
		\frac{A^k}{(k-1)!} \int  h_{3j}^{(k-1)} (\tau \vec \Delta  - \vec x_n ) g(\vec \Delta^{-1} \vec x - \tau) \, d\tau
		\\
		& = C \sum_{j=1}^3 \sum_n \rho_n b_j(\vec x_n) 
		\underbrace{\frac{A^k}{(k-1)!} \int  h_{3j}^{(k-1)} ((\vec x - \vec x_n) - \Delta w ) g(w) \, dw}_{(\vec h_{k})_j (\vec x - \vec x_n)}\\
		& = C \sum_{j=1}^3 \sum_n  \rho_n b_j(\vec x_n)   (\vec h_{k})_j  (\vec x - \vec x_n)\\
		& =  C \cdot \vec h_k * \vec \rho_b (\vec x), 
	\end{split}
\end{equation}
where $\vec \rho_b (\vec x) = \rho(\vec x) \cdot \vec b_1 (\vec x).$

\section*{Acknowledgment}
The authors gratefully acknowledge Albert Boggess for drawing attention to the convergence of the Taylor series in the complex plane, and Olivia C. Sehl for designing the lymph node head phantom used in the experimental studies.

\bibliographystyle{unsrt}

\end{document}